\title[Sphere Packing Proper Colorings of an Expander Graph]{Sphere Packing Proper Colorings of an Expander Graph}
\author[Honglin Zhu]{Honglin Zhu}
\begin{document}

\begin{abstract}
    We introduce graphical error-correcting codes, a new notion of error-correcting codes on $[q]^n$, where a code is a set of proper $q$-colorings of some fixed $n$-vertex graph $G$. We then say that a set of $M$ proper $q$-colorings of $G$ form a $(G, M, d)$ code if any pair of colorings in the set have Hamming distance at least $d$. This directly generalizes typical $(n, M, d)$ codes of $q$-ary strings of length $n$ since we can take $G$ as the empty graph on $n$ vertices. 

    We investigate how one-sided spectral expansion relates to the largest possible set of error-correcting colorings on a graph. For fixed $(\delta, \lambda) \in [0, 1] \times [-1, 1]$ and positive integer $d$, let $f_{\delta, \lambda, d}(n)$ denote the maximum $M$ such that there exists some $d$-regular graph $G$ on at most $n$ vertices with normalized second eigenvalue at most $\lambda$ that has a $(G, M, d)$ code. We study the growth of $f$ as $n$ goes to infinity. We partially characterize the regimes of $(\delta, \lambda)$ where $f$ grows exponentially or is bounded by a constant, respectively. We also prove several sharp phase transitions between these regimes.
\end{abstract}

\maketitle

\section{Introduction}
The study of error-correcting codes of a fixed length over some alphabet is a central topic in coding theory.  Given the word length $n$, the alphabet $[q]$, and the minimum distance $d$, the goal is to find a large subset of $[q]^n$ where the Hamming distance between any two codewords is at least $d$. Such a code of size $M$ is called an \emph{$(n, M, d)$ code}. This problem can be viewed as a sphere packing problem on $[q]^n$ equipped with the Hamming distance.

We extend this notion to introduce the sphere packing problem of proper $q$-colorings of a graph, which we call \emph{graphical error-correcting codes}. In particular, given an $n$ vertex graph $G$, we identify the set of vertex colorings of $G$ using $q$ colors with $[q]^n$. Now, given a minimum distance $d$, we would like to find a large set of vertex colorings where the Hamming distance between any two colorings is at least $d$, with the extra requirement that each coloring is a proper coloring of the graph $G$. Such a set of proper $q$-colorings of size $M$ is called a \emph{$(G, M, d)$ code}. Observe that in the case $G$ is the empty graph on $n$ vertices, a $(G, M, d)$ code is just an $(n, M, d)$ code, so our new notion of graphical error-correcting codes is a direct generalization of typical error-correcting codes. 

In this paper, we investigate the relationship between one-sided spectral expansion and this new sphere packing problem. Expander graphs play an important role in combinatorics and theoretical computer science. They also appear in the study of error-correcting codes, such as in the seminal work of Sipser and Spielman \cite{sipser1996}, who construct good error-correcting codes using expander graphs. By establishing a new connection between these subjects, we relate spectral expansion, an algebraic quantity, with the geometry of the space of proper $q$-colorings of a graph. 

\subsection{Setup}
Fix an integer $q \geq 3$. Let $G = (V, E)$ be a graph on $n$ vertices. A \emph{proper $q$-coloring} of $G$ is a map $X: V \to [q]$ such that $X(u) \neq X(v)$ whenever $uv \in E$. For the rest of the paper, unless otherwise specified, we will simply refer to a proper $q$-coloring as a coloring. Identifying $V$ with $[n]$, we can view $X$ as an element of $[q]^n$. For two colorings $X, Y$ of $G$, their \emph{Hamming distance} is given by
\[
	d_H(X, Y) = |\{i \in [n] | X(i) \neq Y(i)\}|.
\]
We say that a set of colorings of $G$ is \emph{$\delta$-distinct} if any pair of colorings in the set have Hamming distance at least $\delta n$. In other words, a $\delta$-distinct set of colorings is a $(G, M, \delta n)$ code. 

We also introduce another notion of distance, which seems more natural in the context of graph colorings. For two colorings $X, Y$ of $G$, their \emph{coloring distance} is given by 
\[
	d_c(X, Y) = \min_{\sigma \in S_n} d_H(X, \sigma(Y)).
\]
This is simply the minimum Hamming distance between $X$ and $\sigma(Y)$. Similarly, we say that a set of colorings of $G$ is \emph{strongly $\delta$-distinct} if any pair of colorings in the set have coloring distance at least $\delta n$. Clearly, $d_c(X, Y) \leq d_H(X, Y)$, so any strongly $\delta$-distinct set of colorings is also $\delta$-distinct. While there is no obvious relationship between these two distances beside the trivial inequality, most of our asymptotic results will hold under either notion of distance. 

Throughout the rest of this paper, we only work with regular graphs unless otherwise specified. Given a $d$-regular graph $G$, let $A$ denote its normalized adjacency matrix, with entries given by 
\[
	A_{uv} = 
	\begin{cases}
		\frac{1}{d} \quad &\text{if } uv \in E\\
		0 \quad &\text{otherwise}.
	\end{cases}
\]
Let $\lambda_1 \geq \lambda_2 \geq \cdots \geq \lambda_n$ denote its eigenvalues. Recall that $\lambda_1 = 1$ and $\lambda_n \geq -1$. 

For any $0 \leq \delta \leq 1$, let $\mathcal{C}_{G, \delta}$ and $\mathcal{S}_{G, \delta}$ denote the (finite) collections of $\delta$-distinct and strongly $\delta$-distinct sets of $q$-colorings of $G$, respectively. For positive integers $n$ and $d$ and real number $-1 \leq \lambda \leq 1$, let $\mathcal{G}_{n, d, \lambda}$ denote the (finite) set of $d$-regular graphs on at most $n$ vertices with $\lambda_2 \leq \lambda$. For convenience and generality, we allow for the empty graph on $n$ vertices, which we regard as a $0$-regular graph with $\lambda_2 = 1$. 

For any $(\delta, \lambda) \in [0, 1] \times [-1, 1]$ and $d \geq 0$, we define
\[
	f_{\delta, \lambda, d}(n) = \max_{G \in \mathcal{G}_{n, d, \lambda}} \max_{C \in \mathcal{C}_{G, \delta}} |C|
\]
and 
\[
	g_{\delta, \lambda, d}(n) = \max_{G \in \mathcal{G}_{n, d, \lambda}} \max_{C \in \mathcal{S}_{G, \delta}} |C|,
\]
where the maximum outputs $0$ if it is taken over an empty set. Observe that $f_{\delta, \lambda, 0}(n)$ is precisely the largest $M$ such that there exists an $(n, M, \delta n)$ code. Also, we trivially have that $f_{\delta, \lambda, d}(n) \geq g_{\delta, \lambda, d}(n)$. 

The main subject of study of this paper is the asymptotic behavior of $f$ and $g$ when we fix $\delta$ and $\lambda$. By the Plotkin bound \cite{vanlint1999}, the largest $(n, M, \delta n)$ code when $\delta > 1 - 1/q$ has constant size, which implies that $f$ is bounded by a constant when $\delta > 1 - 1/q$. We also have that $d_c(X, Y) \leq (1 - 1/q)n$ for any pair of colorings, which trivializes $g$ whenever $\delta > 1 - 1/q$. Thus, we will restrict our attention to $\delta \in [0, 1 - 1/q]$ for the rest of the paper. Furthermore, the Alon-Boppana bound \cite{nilli1991} states that $d$-regular graphs have $\lambda_2 \geq 2\sqrt{d - 1}/d+ o(1) > 0$. Therefore, we will also make the restriction that $\lambda \in (0, 1]$.

\begin{remark}\label{rmk:one-sided}
We focus on one-sided expanders (graphs with $\lambda_2 \leq \lambda$) instead of two-sided expanders (graphs with $\max \{|\lambda_2|, |\lambda_n|\} \leq \lambda$). This is due to Hoffman's bound \cite{hoffman1970} on the chromatic number of a two-sided expander:
\[
    \chi(G) \geq 1 - \frac{1}{\lambda_n} \geq 1 + \frac{1}{\lambda}.
\]
In particular, this implies that a two-sided expander with $\lambda < \frac{1}{q - 1}$ cannot be $q$-colorable. Nevertheless, we leave as a future direction to investigate the problem for two-sided expanders above this bound.
\end{remark}

Next, we define the relevant asymptotic regimes of $f$ and $g$ which we study. We say that $(\delta, \lambda)$ is in the \emph{exponential regime} for $f$ (resp. $g$), denoted $R_{f, exp}$ (resp. $R_{g, exp}$), if
\[
	\sup_{d \geq 0} \limsup_{n \to \infty} \frac{\log f_{\delta, \lambda, d}(n)}{n} > 0.
\]
We say that $(\delta, \lambda)$ is in the \emph{constant regime} for $f$ (resp. $g$), denoted $R_{f, con}$ (resp. $R_{g, con}$), if
\[
	\sup_{d \geq 0} \limsup_{n \to \infty} f_{\delta, \lambda, d}(n) < \infty.
\]
Finally, we say that $(\delta, \lambda)$ is in the \emph{unique regime} for $g$, denoted $R_{g, uni}$, if
\[
	\sup_{d \geq 0} \limsup_{n \to \infty} g_{\delta, \lambda, d}(n) = 1.
\]
Observe that the unique regime only applies to the coloring distance because for the weaker Hamming distance, we can always cyclically permute the colors of one coloring to get another coloring at Hamming distance $n$. 

A simple yet useful observation is the monotonicity of $f$ and $g$. 
\begin{lemma}\label{lem_monotone}
	For fixed $n$ and $d$, the functions $f_{\delta, \lambda, d}(n)$ and $g_{\delta, \lambda, d}(n)$ are weakly decreasing in $\delta$ and weakly increasing in $\lambda$. 
\end{lemma}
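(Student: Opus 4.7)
The plan is to prove both monotonicity statements directly from the definitions, as they reduce to simple set containments.

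For monotonicity in $\delta$, I would fix $n$, $d$, $\lambda$, and a graph $G \in \mathcal{G}_{n,d,\lambda}$, and show that if $\delta_1 \leq \delta_2$, then $\mathcal{C}_{G,q,\delta_2} \subseteq \mathcal{C}_{G,q,\delta_1}$. Indeed, any set $C$ of colorings whose pairwise distances are at least $\delta_2 n$ automatically satisfies the weaker requirement that they be at least $\delta_1 n$. Taking maxima of $|C|$ over the larger collection $\mathcal{C}_{G,q,\delta_1}$ can only yield a larger (or equal) value, and then taking a further maximum over $G \in \mathcal{G}_{n,d,\lambda}$ preserves this inequality.

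For monotonicity in $\lambda$, I would fix $n$, $d$, $\delta$ and observe that if $\lambda_1 \leq \lambda_2$, then $\mathcal{G}_{n,d,\lambda_1} \subseteq \mathcal{G}_{n,d,\lambda_2}$, since a graph whose normalized second eigenvalue is at most $\lambda_1$ also has second eigenvalue at most $\lambda_2$. Hence the double maximum defining $f_{\delta, \lambda_2, d}(n)$ is taken over a larger index set than that defining $f_{\delta, \lambda_1, d}(n)$, so the former is at least the latter. The only edge case is when $\mathcal{G}_{n,d,\lambda_1}$ is empty, but then $f_{\delta, \lambda_1, d}(n) = 0$ by convention, and the inequality trivially holds.

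There is no real obstacle here; the entire argument is bookkeeping with the convention that the maximum over an empty set is $0$. I would state the proof as two short paragraphs, one for each direction, and highlight the empty-set convention only briefly to ensure the edge case is handled cleanly.
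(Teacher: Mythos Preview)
Your proposal is correct and follows exactly the same approach as the paper: both monotonicity statements are deduced from the observation that $\mathcal{C}_{G,q,\delta}$ shrinks as $\delta$ increases and $\mathcal{G}_{n,d,\lambda}$ grows as $\lambda$ increases. The paper's version is terser, but the content is identical.
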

As a corollary of \Cref{lem_monotone}, if $(\delta, \lambda)$ is in the exponential regime for $g$, then any $(\delta', \lambda')$ with $\delta' \leq \delta$ and $\lambda' \geq \lambda$ is in the exponential regime for both $f$ and $g$. Similarly, if $(\delta, \lambda)$ is in the constant regime for $f$, then any $(\delta', \lambda')$ with $\delta' \geq \delta$ and $\lambda' \leq \lambda$ is in the constant regime for both $f$ and $g$. Finally, if $(\delta, \lambda)$ is in the unique regime (for $g$), then any $(\delta', \lambda')$ with $\delta' \geq \delta$ and $\lambda' \leq \lambda$ is in the unique regime.

\subsection{Main Results}
    The main results of this paper establish a landscape for the different growth regimes for the functions $f$ and $g$ (see \Cref{fig_regimes}). For every main theorem except for \Cref{thm_unique}, which concerns the unique regime, we prove it using the notion of distance that implies the stronger statement. That is, we prove results about the exponential regime using $g$ and results about the constant regime using $f$. 
    
    First, we construct an infinite family of one-sided expander graphs that have an exponential number of strongly $\delta$-distinct colorings for any $\delta < 1 - 1/q$, which is the largest possible. 

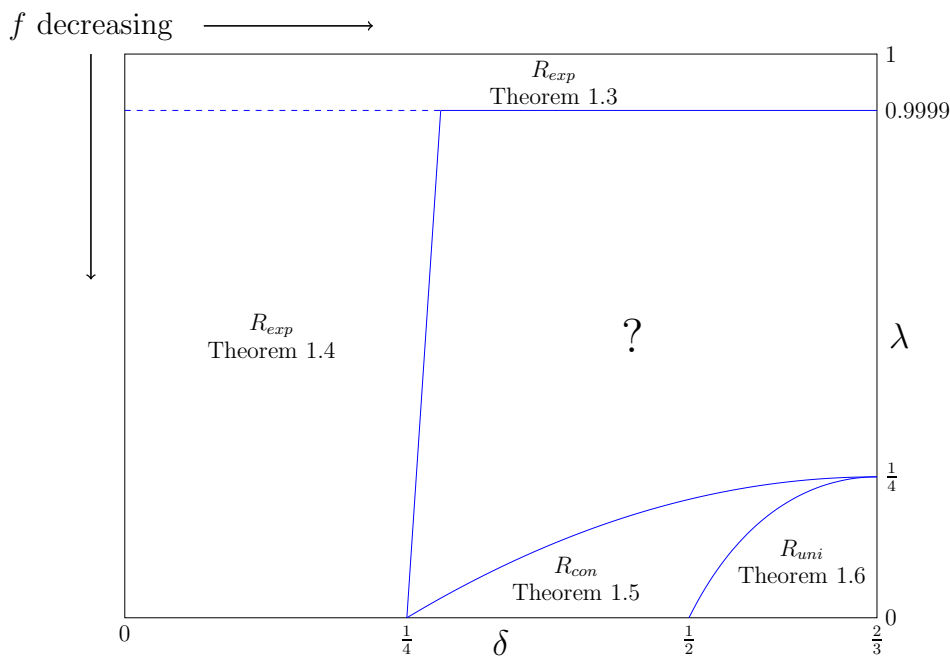
\begin{figure}[ht]
    \centering
    \scalebox{0.8}{
    \begin{tikzpicture}[scale=10, xscale = 2]
      \draw (0,0) rectangle (2/3,1);
    
      \draw (1/3, 0) node[below, scale = 1.5] {$\delta$};
      \draw (2/3, 0.5) node[right, scale = 1.5] {$\lambda$};
      \draw (0, 0) node[below] {$0$};
      \draw (2/3, 0) node[below] {$\frac{2}{3}$};
      \draw (1/4, 0) node[below] {$\frac{1}{4}$};
      \draw (1/2, 0) node[below] {$\frac{1}{2}$};
      \draw (2/3, 1) node[right] {$1$};
      \draw (2/3, 0) node[right] {$0$};
      \draw (2/3, 1/4) node[right] {$\frac{1}{4}$};
      \draw (2/3, 0.9) node[right] {$0.9999$};
      
      \draw[domain=1/2:2/3, smooth, variable=\x, blue] plot ({\x}, {1 + 1/(12 * \x * \x - 16 * \x + 4)});
      
      \draw[domain=1/4:2/3, smooth, variable=\x, blue] plot ({\x}, {1/4 - 36/25 * (\x - 2/3) * (\x - 2/3)});
    
      \draw [-, blue] (1/4, 0) -- (0.28, 0.9);
      \draw [-, blue] (0.28, 0.9) -- (2/3, 0.9);
      \draw [-, blue, dashed] (0, 0.9) -- (0.28, 0.9);
      
      \draw [->, thick] (-0.03, 1) -- (-0.03, 0.6);
      \draw [->, thick] (0.07, 1.05) -- (0.22, 1.05);
      
      % Labels
      \draw (-0.03,1.05) node[scale = 1.3] {$f$, $g$ decreasing};
      \draw (0.6,0.1) node[align=center] {$R_{g, uni}$ \\ \Cref{thm_unique}};
      \draw (0.13,0.5) node[align=center] {$R_{g, exp}$ \\ \Cref{thm_bipartite_exp}};
      \draw (0.38,0.95) node[align=center] {$R_{g, exp}$ \\ \Cref{thm_exp}};
      \draw (0.4,0.07) node[align=center] {$R_{f, con}$ \\ \Cref{thm_const}};
      \draw (0.45,0.5) node[align=center, scale = 2] {?};
    \end{tikzpicture}
    }
    \caption{Schematic diagram for the three regimes when $q = 3$. The curves bounding the regimes may not be precise. Note that the boundary of the exponential regime at $(1/4, 0)$ has a positive slope. The growth of $f$ and $g$ in the central region is unknown. We label the regimes with the function that implies the stronger statement.}
    \label{fig_regimes}
\end{figure}

\begin{theorem}\label{thm_exp}
	There exists some universal constant $\alpha > 0$ independent of $q$ such that any $(\delta, 1 - \alpha)$ with $\delta < 1 - 1/q$ is in the exponential regime for both $f$ and $g$. 
\end{theorem}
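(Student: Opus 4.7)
The plan is to exhibit, for each $q \geq 3$ and each $\delta < 1 - 1/q$, a family of $d$-regular graphs $G_n$ on $n$ vertices (with $d = d(q, \delta)$) satisfying $\lambda_2(G_n) \leq 1 - \kappa$ for a universal $\kappa > 0$, and carrying $e^{\Omega(n)}$ pairwise $\delta$-distinct proper $q$-colorings. For the universal constant I would take $\kappa$ just below $1 - 2\sqrt{2}/3$: for any $d \geq 3$, either random $d$-regular graphs (near-Ramanujan by Friedman's theorem) or bipartite $d$-regular Ramanujan graphs (Marcus--Spielman--Srivastava) satisfy $\lambda_2 \leq 2\sqrt{d-1}/d + o(1) \leq 2\sqrt{2}/3$.

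I would then choose $d$ small enough in terms of $q$ that $G_n$ has $e^{\Omega(n)}$ proper $q$-colorings; the first-moment count in the configuration model is $q^n (1 - 1/q)^{dn/2}$, which is exponential whenever $d < 2\log q / \log(q/(q-1))$, and a second-moment argument promotes this to a positive-probability statement for a concrete $G_n$. Sample two proper $q$-colorings $X, Y$ of $G_n$ independently and uniformly. Since the uniform measure on proper colorings is $S_q$-invariant, each marginal $X(v)$ is uniform on $[q]$; combined with the independence of $X$ and $Y$ we get $\mathbb{E}[d_H(X, \sigma Y)] = (1 - 1/q) n$ for every $\sigma \in S_q$. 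If $d_H(X, \sigma Y)$ enjoys exponential-tail concentration about its mean, a union bound over the $q!$ permutations yields $\Pr[d(X, Y) < \delta n] \leq e^{-\Omega(n)}$ for any $\delta < 1 - 1/q$. Hence at most an exponentially small fraction of coloring pairs are too close, and a greedy packing extracts an exponential-size pairwise $\delta$-distinct subset.

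The main obstacle is the concentration step. A naive volume-based argument fails when $\delta$ approaches $1 - 1/q$, since Hamming balls of radius $\delta n$ nearly fill $[q]^n$; the exponential regime instead requires pairwise distances to concentrate tightly at $(1 - 1/q) n$, which reduces to long-range decorrelation of the $q$-coloring Gibbs measure on $G_n$. Such spatial mixing is well understood in the replica-symmetric regime when $d$ is sufficiently small relative to $q$ (cf.\ work of Jerrum, Dyer--Frieze, and Vigoda) and provides the needed concentration. For small $q$ with $d$ comparable to $q$, where the coloring measure may cluster and replica symmetry may break, one could instead use a more structured construction --- for instance, the replacement product of a good expander $H$ with the clique $K_q$, whose proper $q$-colorings are parameterised by assignments $V(H) \to S_q$ satisfying a per-edge derangement condition --- reducing the pairwise-distance estimate to a combinatorial constraint satisfaction problem on $H$ that can be analysed directly.
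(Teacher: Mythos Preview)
Your outline correctly isolates concentration of $d_H(X,\sigma Y)$ about $(1-1/q)n$ as the crux, but this is exactly where the proposal has a genuine gap. The spatial-mixing results you cite (Jerrum, Dyer--Frieze, Vigoda) all require $q$ to exceed roughly $11d/6$; since a nontrivial expander needs $d \geq 3$, this leaves $q \in \{3,4,5\}$ uncovered, and these are precisely the cases where the uniform Gibbs measure on proper colorings may have long-range correlations and fail to concentrate. Your fallback via the replacement product with $K_q$ is only a sketch: you would still have to (i) show the product has $\lambda_2 \leq 1-\kappa$ for a \emph{universal} $\kappa$ (replacement products generally degrade expansion substantially), and (ii) establish concentration for the resulting $S_q$-valued CSP on $H$, which is itself a constrained measure and not obviously easier than the original problem. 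As written, neither branch closes for $q=3$.

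The paper sidesteps concentration for constrained measures entirely. It builds $G_n$ from a $3$-regular Ramanujan graph $H_n$ by replacing each edge with a $K_{3,3}^-$ gadget. The decisive feature is that one obtains a proper $q$-coloring of $G_n$ by coloring $V(H_n)$ \emph{arbitrarily} --- i.i.d.\ uniform on $[q]$, with no properness constraint whatsoever --- and then extending \emph{deterministically} to the gadgets. Hence the whole coloring is a bounded-Lipschitz function of $|V(H_n)|$ independent inputs, and McDiarmid's inequality yields the required concentration in one line; no Gibbs measure, no mixing. Expansion of $G_n$ is verified by a direct Cheeger computation, giving $\lambda_2 \leq 1-10^{-4}$. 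The resulting $\kappa = 10^{-4}$ is far smaller than the $1-2\sqrt{2}/3$ you target, but it is uniform in $q$ and the argument is complete for every $q \geq 3$.
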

    We also show that bipartite expanders with arbitrarily small $\lambda_2$ can have an exponential number of strongly $\delta$-distinct colorings for a certain positive value of $\delta$.
\begin{theorem}\label{thm_bipartite_exp}
	For any $\lambda > 0$, there exists $\epsilon = \epsilon(\lambda) > 0$ such that $\left(1 - \frac{1}{2} \left( \frac{1}{\lfloor \frac{q}{2} \rfloor} + \frac{1}{\lceil \frac{q}{2} \rceil} \right) + \epsilon, \lambda \right)$ is in the exponential regime for both $f$ and $g$.
\end{theorem}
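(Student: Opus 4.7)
The plan is to construct, for any $\lambda > 0$, a bipartite $d$-regular graph $G$ with $\lambda_2(G) \leq \lambda$ and an exponentially large $(\delta^* + \epsilon)$-distinct family of proper $q$-colorings, where $\delta^* := 1 - \tfrac{1}{2}(\tfrac{1}{k_1} + \tfrac{1}{k_2})$, $k_1 = \lfloor q/2 \rfloor$, $k_2 = \lceil q/2 \rceil$, and $\epsilon = \epsilon(\lambda) > 0$. First, fix $d = d(\lambda)$ large enough that bipartite $d$-regular $\lambda$-expanders exist (for instance via Marcus--Spielman--Srivastava), fix such a $G$ with equal sides $L \sqcup R$ of size $n/2$, and fix disjoint color sets $S_L, S_R \subseteq [q]$ with $|S_L| = k_1$ and $|S_R| = k_2$. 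The naive candidate is the ``bipartite-structured'' family $\mathcal{F}_0 := \{X : X|_L \in S_L^L,\ X|_R \in S_R^R\}$, of size $k_1^{n/2} k_2^{n/2}$ and automatically proper. However, a case analysis of $\sigma \in S_q$ (cross-partition $\sigma$ give Hamming distance $\gtrsim (1 - O(1/q))n$; partition-preserving $\sigma \in S_{k_1} \times S_{k_2}$ decompose into independent left/right optimizations summing to at most $\delta^* n$) shows the $\sigma$-minimum distance of any pair in $\mathcal{F}_0$ is at most $\delta^* n$, so $\mathcal{F}_0$ alone only delivers $\delta \leq \delta^*$ and one must enlarge it.

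To push past $\delta^*$, I exploit left vertices whose $R$-neighborhoods miss a color in $S_R$. Parameterize the new family $\mathcal{F}$ by $X_R \in S_R^R$ as follows: set $X|_R := X_R$, and for each $u \in L$, if $X_R(N(u)) \subsetneq S_R$ assign $X(u)$ to be the least-indexed color in $S_R \setminus X_R(N(u))$ (a ``swap''); otherwise assign $X(u)$ to a fixed color in $S_L$. Then $X$ is proper by construction and $|\mathcal{F}| = k_2^{n/2}$, still exponential. Let $T(X_R) \subseteq L$ denote the swap set; a direct estimate gives $\Pr[u \in T(X_R)] = \rho = \rho(d, k_1, k_2) > 0$ uniformly over $u$, for any fixed $d$. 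The bounded-differences functional $(X_R, Y_R) \mapsto |T(X_R) \triangle T(Y_R)|$ (one coordinate affects at most $3d$ values) concentrates around its mean $\approx \rho n$, so Azuma gives $|T(X_R) \triangle T(Y_R)| \geq \tfrac{1}{2}\rho n$ except with probability $e^{-\Omega(\rho^2 n / d^2)}$ over independent uniform $X_R, Y_R$.

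The key step is: every $u \in T(X_R) \triangle T(Y_R)$ contributes $+1$ to $d_H(X, \sigma(Y))$ for every partition-preserving $\sigma \in S_{k_1} \times S_{k_2}$, because one of $X(u), Y(u)$ lies in $S_L$ while the other lies in $S_R$, and $\sigma$ fixes these sets setwise. Cross-partition $\sigma$ are ruled out by their Hamming distance being $\gtrsim (1 - O(1/q))n$, far above the target. Combining with the baseline $\delta^* n$ contribution from the unswapped majority yields $d(X, Y) \geq (\delta^* + \tfrac{1}{2}\rho) n$ with probability $1 - e^{-\Omega(\rho^2 n / d^2)}$, and a standard Gilbert--Varshamov random expurgation on a random subfamily of size $\exp(c \rho^2 n / d^2)$ then produces the desired exponentially large $\delta$-distinct family with $\epsilon = \tfrac{1}{2}\rho > 0$. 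The main obstacle is this distance lower bound, specifically showing the swap-set contribution is not cancelled by the $\sigma$-minimization; a secondary technical point is to confirm $\rho > 0$ holds uniformly for the specific $G$ (rather than merely in expectation over a random graph), which follows from the expander mixing lemma applied to individual neighborhoods to control the distribution of colors seen by each left vertex.
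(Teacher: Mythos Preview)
Your construction has a fatal gap for $q \geq 4$. By parameterizing $\mathcal{F}$ solely by $X_R$ and assigning every unswapped left vertex the \emph{same fixed color} in $S_L$, you destroy the left-side contribution to the Hamming distance: for $\sigma = \mathrm{id}$ and any two colorings $X,Y \in \mathcal{F}$, every $u \in L \setminus (T(X_R)\cup T(Y_R))$ satisfies $X(u)=Y(u)$, contributing $0$. Thus
\[
d_H(X,Y) \;\lesssim\; \Bigl(1-\tfrac{1}{k_2}\Bigr)\tfrac{n}{2} \;+\; |T(X_R)|+|T(Y_R)| \;\approx\; \Bigl(\tfrac{1}{2}-\tfrac{1}{2k_2}+\rho\Bigr)n,
\]
and this matches $\delta^* n$ only in the degenerate case $k_1=1$ (i.e.\ $q=3$). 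For $q\geq 4$ you need the left side to supply at least $\bigl(\tfrac{1}{2}-\tfrac{1}{2k_1}\bigr)n \geq n/4$, whereas $\rho = \Pr[\,d$ i.i.d.\ uniform samples from $[k_2]$ miss a value$\,]$ is exponentially small in the very $d$ you made large to force $\lambda_2\leq\lambda$. So the asserted ``baseline $\delta^* n$ contribution from the unswapped majority'' is simply false, and $d(X,Y)$ falls far short of $\delta^* n$, let alone $(\delta^*+\epsilon)n$.

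The paper's construction avoids exactly this pitfall by keeping independent randomness on \emph{both} sides. Each left vertex is colored uniformly from $S_L$ except that with a small probability $\tau$ it receives the ``foreign'' color $q$; each right vertex is colored uniformly from $S_R$ except that it is forced to $q-1$ when adjacent to a $q$-colored left vertex. The uniform randomness on the bulk of both sides recovers the full $\delta^* n$ baseline, while the set $Q_X\triangle Q_Y$ of size $\approx 2\tau n$ supplies the extra $\epsilon n$; McDiarmid then controls all fluctuations. Your swap idea is in the right spirit---a small controlled leakage across the bipartition boosts the distance---but the leakage must be layered on top of, not in place of, the left-side randomness.
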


    Conversely, we show a sharp phase transition between the exponential regime and the constant regime at the threshold above for $\lambda \to 0$, for both $f$ and $g$. In other words, for graphs with nearly perfect spectral expansion, increasing $\delta$ even slightly above the threshold in \Cref{thm_bipartite_exp} bounds the number of possible $\delta$-distinct colorings by a constant. Furthermore, we establish a sharp boundary point for the constant regime for $f$ and $g$ on the line $\delta = 1 - 1/q$. 
\begin{theorem}\label{thm_const}
    The following hold:
    \begin{enumerate}
        \item For any $\epsilon > 0$, there exists some $\lambda = \lambda(\epsilon) > 0$ such that $\left(1 - \frac{1}{2} \left( \frac{1}{\lfloor \frac{q}{2} \rfloor} + \frac{1}{\lceil \frac{q}{2} \rceil} \right) + \epsilon, \lambda \right)$ is in the constant regime for both $f$ and $g$.
        \item For any $\epsilon > 0$, $\left(1 - \frac{1}{q}, \frac{1}{(q - 1)^2} - \epsilon \right)$ is in the constant regime for both $f$ and $g$. Conversely, $\left(1 - \frac{1}{q}, \frac{1}{(q - 1)^2}\right)$ is not in the constant regime for either $f$ or $g$.
    \end{enumerate}
\end{theorem}

    Finally, we establish sharp boundary points for the unique regime (for $g$). In particular, the boundary point on the line $\delta = 1 - 1/q$ coincides with that of the constant regime for $f$.
\begin{theorem}\label{thm_unique}
    The following hold: 
    \begin{enumerate}
        \item For any $\epsilon > 0$, there exists some $\lambda = \lambda(\epsilon) > 0$ such that $\left(1 - \frac{1}{q - 1} + \epsilon, \lambda \right)$ is in the unique regime. Conversely, $\left(1 - \frac{1}{q - 1}, \lambda \right)$ is not in the unique regime for all $\lambda > 0$.
        \item For any $\epsilon > 0$, $\left(1 - \frac{1}{q}, \frac{1}{(q - 1)^2} - \epsilon \right)$ is in the unique regime. 
    \end{enumerate}
\end{theorem}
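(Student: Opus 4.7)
The plan is to reduce everything to a spectral statement about the overlap matrix. For two proper colorings $X, Y$ of $G$ on $n$ vertices, set $M_{ab} = |X^{-1}(a) \cap Y^{-1}(b)|$ and $V_{ab} = X^{-1}(a) \cap Y^{-1}(b)$; then $d(X, Y) = n - \max_{\sigma \in S_q} \sum_a M_{a, \sigma(a)}$. The $V_{ab}$ partition $V(G)$, and an edge of $G$ connects $V_{ab}$ to $V_{cd}$ only when $a \neq c$ and $b \neq d$, so $(X, Y)$ is a graph homomorphism $G \to H := K_q \otimes K_q$. Averaging over $\sigma$ gives $\max_\sigma \sum_a M_{a, \sigma(a)} \geq n/q$, recovering the general bound $d(X, Y) \leq (1 - 1/q) n$.

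For the threshold $\delta = 1 - 1/q$ (parts 3 and 4), the approach is as follows. The hypothesis $\max_\sigma \leq n/q$ combined with the averaging bound forces $\sum_a M_{a, \sigma(a)} = n/q$ for every $\sigma$; subtracting these identities for two permutations differing by a transposition yields $M_{ip} + M_{jr} = M_{ir} + M_{jp}$ for all $i \neq j$, $p \neq r$, equivalent to the additive decomposition $M_{ab} = \alpha_a + \beta_b$ for some $\alpha, \beta \in \mathbb{R}^q$. The key spectral fact to invoke is $\lambda_2(H) = 1/(q-1)^2$, realized by eigenvectors of the form $(a, b) \mapsto \phi(a) \psi(b)$ with $\phi, \psi \perp \mathbf{1}$. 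Lifting such $\phi, \psi$ via $f(v) = \phi(X(v)) \psi(Y(v))$, the additive structure gives $f \perp \mathbf{1}$, and a direct Rayleigh-quotient computation using the partition structure and the $d$-regularity of $G$ should yield $\langle f, A_G f \rangle / \|f\|^2 = 1/(q-1)^2$ (exactly in the balanced case and up to controllable corrections in general), so $\lambda_2(G) \geq 1/(q-1)^2$ and part 3 follows. For part 4, it suffices to exhibit $G_k = K_q^{\otimes k}$: it is $(q-1)^k$-regular on $q^k$ vertices with spectrum $\{(-1/(q-1))^{k-j} : 0 \leq j \leq k\}$, so $\lambda_2 = 1/(q-1)^2$, and its $k$ coordinate projections $c_i(x) = x_i$ are pairwise at distance $(1 - 1/q) n$ by a short Hamming computation, giving $f \to \infty$.

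For the threshold $\delta = 1 - 1/(q-1)$ (parts 1 and 2), the plan is a coarser analogue of the above. For part 1, the hypothesis $\max_\sigma < n/(q-1)$ forces every entry $M_{ab} < n/(q-1)$, so $M$ is spread out; applying the expander mixing lemma to each $V_{ab}$—or, equivalently, lifting suitable eigenvectors into the subspace of $\mathbb{R}^{V(G)}$ spanned by $v \mapsto \phi(X(v)) \psi(Y(v))$—should force a Rayleigh quotient exceeding $\lambda$ when $\lambda$ is small enough in terms of $\epsilon$. For part 2, I plan to construct, for each $\lambda > 0$, a graph with $\lambda_2 \leq \lambda$ and two colorings at distance $(1 - 1/(q-1)) n$: the prototype for $q \geq 4$ is $K_{q-1}^{\otimes 2}$, whose two coordinate colorings are at exactly the target distance but with $\lambda_2 = 1/(q-2)^2$; combining it with a Ramanujan expander via a product or lift that preserves two of the target colorings should drive $\lambda_2 \to 0$. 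For $q = 3$ a separate construction—e.g., a bipartite Ramanujan expander admitting two proper $3$-colorings at distance $n/2$—is needed.

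The hardest steps will be the Rayleigh-quotient computation in part 3 in the general non-balanced additive case, where the exact threshold $1/(q-1)^2$ must be preserved without assuming equal $|V_{ab}|$'s, and the part-2 construction for arbitrarily small $\lambda$, which requires combining coloring rigidity with spectral flexibility in a delicate way—particularly subtle for $q = 3$, where naive tensor constructions like $K_2^{\otimes k}$ degenerate into disconnected graphs.
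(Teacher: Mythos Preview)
Your route is genuinely different from the paper's. For the two inclusion statements (parts 1 and 3) the paper works not with the overlap matrix $M$ but with the agreement sets $V_\sigma=\{v:X(v)=\sigma(Y(v))\}$: applying Courant--Fischer to the centered indicator of each $V_\sigma$ gives $e(V_\sigma,\overline{V_\sigma})\ge 2(1-\lambda)w(V_\sigma)(1-w(V_\sigma))$, and summing over $\sigma\in S_q$ (each edge contributes to exactly $2(q-2)(q-2)!$ of these boundaries) yields the single inequality $(q-1)(1-\delta)^2+(1-(q-1)(1-\delta))^2\ge 1-\frac{1-1/(q-1)}{1-\lambda}$, from which both thresholds drop out. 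For part 2 the paper uses a much simpler and uniform construction than yours: take any bipartite $d$-regular Ramanujan graph on $2(q-1)n$ vertices, color one side entirely by $1$ and the other side evenly by $2,\dots,q$ to get $X$, and swap roles for $Y$; then $d(X,Y)=(1-\tfrac{1}{q-1})|V|$ for every $q\ge 3$, with no need for tensor products, lifts, or a separate $q=3$ case. For part 4 both arguments use $K_q^{\otimes k}$ with its coordinate colorings; the paper additionally takes $2$-lifts to produce an infinite family at fixed degree, but since $f_{\delta,\lambda,d}(n)$ maximizes over graphs on at most $n$ vertices, your simpler version already suffices for the stated claim.

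There is a real gap in your part-3 plan. The Rayleigh quotient of the pullback $f(v)=\phi(X(v))\psi(Y(v))$ is \emph{not} determined by the overlap matrix $M$: it depends on the edge counts $|E(V_{ab},V_{cd})|$, which are not fixed even in the fully balanced case $M_{ab}=n/q^2$, so the claimed exact value $1/(q-1)^2$ is false in general. The argument can be rescued by an averaging step you did not mention: averaging over unit $\phi,\psi\perp\mathbf 1$ gives $\mathbb E\langle f,Af\rangle=\tfrac{1}{(q-1)^2}\,\mathbb E\|f\|^2$ (using only that $X,Y$ are proper colorings), so some choice of $\phi,\psi$ achieves $\langle f,Af\rangle\ge\tfrac{1}{(q-1)^2}\|f\|^2$ with $f\neq 0$; the additive structure $M_{ab}=\alpha_a+\beta_b$ is then exactly what guarantees $f\perp\mathbf 1$ for \emph{every} such $\phi,\psi$, so Courant--Fischer applies. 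Your part-1 sketch is too vague to evaluate: the expander mixing lemma needs two-sided expansion, and ``lifting suitable eigenvectors'' does not obviously produce a test vector orthogonal to $\mathbf 1$ once the exact additive structure is lost---the paper's $V_\sigma$ approach sidesteps this entirely.
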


\subsection{Outline}
The rest of the paper is organized as follows. In \Cref{sec_prelim}, we recall some preliminary results and prove \Cref{lem_monotone}. In \Cref{sec_exp}, we study the exponential regime and prove \Cref{thm_exp,thm_bipartite_exp}. In \Cref{sec_const_regime}, we study the constant regime and prove \Cref{thm_const}. In \Cref{sec_unique_regime}, we study the unique regime and prove \Cref{thm_unique}. In \Cref{sec_future}, we discuss some possible future directions. 

\section*{Acknowledgments}
This work was carried out as part of a direct funding project in the Undergraduate Research Opportunities Program at MIT. The author thanks Yufei Zhao for suggesting the problem and for his mentorship in this project. We also thank Mitali Bafna for sharing her research which inspired this work. We thank Mitali Bafna and Yufei Zhao for helpful discussions. We thank Sammy Luo and Yufei Zhao for valuable comments which improved this paper. Finally, we thank the MIT UROP office and the MIT Department of Mathematics for providing this research opportunity.

\section{Preliminaries}\label{sec_prelim}
In this section, we recall some preliminary results about expander graphs and tools that we apply throughout the rest of the paper. 

We first prove \Cref{lem_monotone}.
\begin{proof}[Proof of \Cref{lem_monotone}]
    Fixing $n, d, \lambda$, we see that $f$ and $g$ weakly decrease in $\delta$ as the collections $\mathcal{C}_{G, \delta}$ and $\mathcal{S}_{G, \delta}$ shrink as $\delta$ increases. Similarly, fixing $n, d, \delta$, we see that $f$ and $g$ weakly increase in $\lambda$ as the collection $G_{n, d, \lambda}$ grows as $\lambda$ increases. 
\end{proof}

Next, we recall several general preliminary results we use in later sections. First, recall the Courant-Fischer characterization of the second eigenvalue. 
\begin{lemma}\label{lem_courant}
	Let $G = (V, E)$ be a regular graph and let $A$ be its normalized adjacency matrix. Then the second eigenvalue $\lambda_2$ of $A$ satisfies
	\[
		\lambda_2 = \max_{\substack{x \neq 0 \\ x \perp \mathbf{1}}} \frac{\langle x, Ax \rangle}{\langle x, x \rangle}.
	\]
\end{lemma}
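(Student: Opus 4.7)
The plan is to apply the spectral theorem to $A$ and then directly compute the Rayleigh quotient on the orthogonal complement of $\mathbf{1}$. Since $G$ is $d$-regular, the normalized adjacency matrix $A$ is real and symmetric, so there is an orthonormal basis $v_1, \ldots, v_n$ of $\mathbb{R}^n$ consisting of eigenvectors with $A v_i = \lambda_i v_i$ and $\lambda_1 \geq \lambda_2 \geq \cdots \geq \lambda_n$. By regularity, $A\mathbf{1} = \mathbf{1}$, so $1$ is an eigenvalue; and since $A$ has nonnegative entries with every row summing to $1$, the operator norm bound $\|Ax\|_\infty \leq \|x\|_\infty$ forces $\lambda_1 = 1$. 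I may therefore take $v_1 = \mathbf{1}/\sqrt{n}$.

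Next, for the upper bound, let $x$ be any nonzero vector orthogonal to $\mathbf{1}$, and expand $x = \sum_{i=2}^n c_i v_i$ in the eigenbasis (the $v_1$ coefficient vanishes precisely because $x \perp \mathbf{1}$). Orthonormality gives $\langle x, x \rangle = \sum_{i=2}^n c_i^2$ and
\[
\langle x, Ax \rangle = \sum_{i=2}^n \lambda_i c_i^2 \leq \lambda_2 \sum_{i=2}^n c_i^2 = \lambda_2 \langle x, x \rangle,
\]
so the Rayleigh quotient is at most $\lambda_2$. To see that this is sharp, I take $x = v_2$, which lies in $\mathbf{1}^\perp$ and satisfies $\langle v_2, A v_2 \rangle / \langle v_2, v_2 \rangle = \lambda_2$ directly.

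There is no real obstacle here; this is simply the classical variational characterization of the second eigenvalue of a symmetric matrix, specialized to a $d$-regular graph using the fact that $\mathbf{1}$ is a top eigenvector. The only minor point of care is the degenerate situation $\lambda_2 = 1$ (for instance when $G$ is disconnected), but the argument above is unaffected since the expansion in $v_2, \ldots, v_n$ and the choice $x = v_2$ both remain valid.
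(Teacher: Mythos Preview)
Your proof is correct and is the standard spectral-theorem argument. The paper does not actually supply a proof of this lemma; it merely recalls the Courant--Fischer characterization as a known preliminary fact, so there is nothing to compare against.
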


Also recall Cheeger's inequality, which relates spectral expansion with edge expansion. 
\begin{lemma}[\cite{cheeger1971}]\label{lem_cheeger}
    Let $G = (V, E)$ be a $d$-regular graph and let 
    \[
        h = h(G) = \min_{\substack{S \subset V \\ 0 < |S| \leq |V|/2}} \frac{|E(S, V \setminus S)|}{|S|}
    \]
    be its edge expansion ratio. Then
    \[
        \frac{d(1 - \lambda_2)}{2} \leq h \leq d\sqrt{2(1 - \lambda_2)}.
    \]
\end{lemma}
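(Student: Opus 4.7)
The plan is to prove the two inequalities separately: the lower bound follows from the Courant--Fischer characterization in Lemma \ref{lem_courant} applied to an indicator test vector, while the upper bound requires a rounding argument that extracts a sparse cut from a second eigenvector via a threshold sweep.

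For the lower bound $\frac{d(1-\lambda_2)}{2} \leq h$, I would let $S$ with $|S| \leq |V|/2$ achieve the minimum in the definition of $h$ and plug the mean-zero shifted indicator $x = \mathbf{1}_S - \frac{|S|}{|V|}\mathbf{1}$ into Lemma \ref{lem_courant}. By $d$-regularity, a short computation yields $\langle x, (I-A)x\rangle = \frac{1}{d}\sum_{uv\in E}(x_u - x_v)^2 = |E(S, V\setminus S)|/d$ and $\langle x, x\rangle = |S|(|V|-|S|)/|V| \geq |S|/2$, where the last inequality uses $|S| \leq |V|/2$. Substituting into $1 - \lambda_2 \leq \langle x, (I-A)x\rangle/\langle x, x\rangle$ and rearranging gives the claim.

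For the upper bound $h \leq d\sqrt{2(1-\lambda_2)}$, I would begin with a second eigenvector $x$, let $c$ be a median of its coordinates, set $z = x - c\mathbf{1}$, and choose $y$ to be whichever of $z_+, z_-$ has support of size at most $|V|/2$. Using the pointwise edge inequality $(z_+(u) - z_+(v))^2 + (z_-(u) - z_-(v))^2 \leq (z_u - z_v)^2$ together with the fact that the shift by $c\mathbf{1}$ only inflates the norm (because $x \perp \mathbf{1}$), one shows $R(y) := \sum_{uv\in E}(y_u - y_v)^2 / (d\sum_u y_u^2) \leq 1 - \lambda_2$. Then I would order the support of $y$ as $v_1, \dots, v_m$ with $y_{v_1} \geq \cdots \geq y_{v_m} > 0$ and consider the nested level sets $S_i = \{v_1, \dots, v_i\}$, each of size at most $|V|/2$. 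The telescoping identity $\sum_i |E(S_i, V\setminus S_i)|(y_{v_i}^2 - y_{v_{i+1}}^2) = \sum_{uv\in E}|y_u^2 - y_v^2|$, combined with the Cauchy--Schwarz bound $\sum_{uv}|y_u^2 - y_v^2| \leq \sqrt{\sum_{uv}(y_u - y_v)^2}\sqrt{\sum_{uv}(y_u + y_v)^2} \leq d\sqrt{2R(y)}\sum_v y_v^2$ and the weighted expression $\sum_v y_v^2 = \sum_i |S_i|(y_{v_i}^2 - y_{v_{i+1}}^2)$, forces some $S_i$ to achieve expansion at most $d\sqrt{2R(y)} \leq d\sqrt{2(1-\lambda_2)}$, bounding $h$ as required.

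The main obstacle will be the Cauchy--Schwarz bookkeeping in the threshold sweep, in particular producing the constant $\sqrt{2}$: it arises from the bound $\sum_{uv\in E}(y_u + y_v)^2 \leq 2d\sum_v y_v^2$, and chaining it with the Rayleigh-quotient bound on $y$ requires tracking the exact normalization. The easy direction, as well as the median/positive-part reduction that produces a support of size at most $|V|/2$ with controlled Rayleigh quotient, are routine verifications once set up correctly.
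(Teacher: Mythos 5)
The paper cites Cheeger's inequality as a preliminary (attributed to \cite{cheeger1971}) and does not supply a proof of it, so there is no in-text argument to compare against. Your proposal reproduces the standard Alon--Cheeger argument correctly: the easy direction via the Rayleigh quotient of the recentered indicator $\mathbf{1}_S - \tfrac{|S|}{|V|}\mathbf{1}$, and the hard direction via truncating (the positive or negative part of) a shifted second eigenvector and running a threshold sweep with Cauchy--Schwarz. The computations you sketch are right, including the normalization bookkeeping: $\langle x,(I-A)x\rangle = \tfrac{1}{d}\sum_{uv\in E}(x_u-x_v)^2$, $\langle x,x\rangle = |S|(|V|-|S|)/|V| \geq |S|/2$, the coarea identity $\sum_i |E(S_i,V\setminus S_i)|(y_{v_i}^2-y_{v_{i+1}}^2) = \sum_{uv\in E}|y_u^2-y_v^2|$, and the bound $\sum_{uv\in E}(y_u+y_v)^2 \leq 2d\sum_v y_v^2$, which does produce the advertised $\sqrt{2}$.

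One small imprecision worth tightening: when $c$ is a median, \emph{both} $z_+$ and $z_-$ have support of size at most $|V|/2$, so "whichever has support at most $|V|/2$" does not pin down the choice. The correct selection criterion is to take whichever of $z_+,z_-$ has the smaller Rayleigh-type ratio; the mediant inequality $\min(a_1/b_1,a_2/b_2) \leq (a_1+a_2)/(b_1+b_2)$ then guarantees $R(y)\leq 1-\lambda_2$, using the pointwise inequality on numerators and $\|z_+\|^2+\|z_-\|^2=\|z\|^2=\|x\|^2+|V|c^2\geq\|x\|^2$ on denominators. With that fix the argument is complete.
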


A main strategy we use in constructions in \Cref{sec_exp} is random coloring. For this purpose, we recall McDiarmid's inequality, also known as the bounded difference inequality, which provides a concentration bound on Lipschitz functions of independent random inputs. 
\begin{lemma}[\cite{mcdiarmid1989}]\label{lem_bounded_diff}
    Let $X_1 \in \Omega_1, \ldots, X_n \in \Omega_n$ be independent random variables. Suppose $f: \Omega_1 \times \cdots \times \Omega_n \to \R$ is $c$-Lipschitz; that is,
    \[
        |f(x_1, \ldots, x_n) - f(x'_1, \ldots, x'_n)| \leq c
    \]
    whenever $(x_1, \ldots, x_n)$ and $(x'_1, \ldots, x'_n)$ only differ on one coordinate. Then for any $t \geq 0$,
    \[
        \Pr[f(X_1, \ldots, X_n) \leq \E[f(X_1, \ldots, X_n)] - t] \leq e^{\frac{-2t^2}{c^2 n}}
    \]
    and 
    \[
        \Pr[f(X_1, \ldots, X_n) \geq \E[f(X_1, \ldots, X_n)] + t] \leq e^{\frac{-2t^2}{c^2 n}}
    \]
\end{lemma}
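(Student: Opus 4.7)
The plan is to prove the bounded differences inequality by the standard Doob martingale argument combined with the Azuma--Hoeffding inequality. Define the filtration $\mathcal{F}_i = \sigma(X_1, \ldots, X_i)$ and the Doob martingale
\[
    Z_i = \E\bigl[f(X_1, \ldots, X_n) \mid \mathcal{F}_i\bigr], \qquad i = 0, 1, \ldots, n,
\]
so that $Z_0 = \E[f(X_1, \ldots, X_n)]$ and $Z_n = f(X_1, \ldots, X_n)$. The goal is to bound the two tails of $Z_n - Z_0$.

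The main step is to verify that each martingale difference $D_i := Z_i - Z_{i-1}$ almost surely lies in an interval of length at most $c$. To do this, I would condition on $\mathcal{F}_{i-1}$, treat $X_1, \ldots, X_{i-1}$ as fixed, and write $D_i$ as a function of $X_i$ after integrating out $X_{i+1}, \ldots, X_n$. For any two candidate values $x_i, x_i'$ of the $i$-th coordinate, the $c$-Lipschitz hypothesis implies that the corresponding conditional expectations differ by at most $c$ (by coupling the future coordinates and applying the Lipschitz bound pointwise), so the conditional range of $D_i$ given $\mathcal{F}_{i-1}$ is at most $c$.

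Once the bounded-difference property is established, I would apply Hoeffding's lemma to each conditional moment generating function: for any real $s$,
\[
    \E\bigl[e^{s D_i} \mid \mathcal{F}_{i-1}\bigr] \leq e^{s^2 c^2 / 8}.
\]
Iterating this bound through the tower property yields $\E[e^{s(Z_n - Z_0)}] \leq e^{n s^2 c^2 / 8}$. A Markov/Chernoff step together with optimizing over $s$ (choosing $s = 4t/(c^2 n)$) produces the upper-tail bound $\Pr[Z_n - Z_0 \geq t] \leq e^{-2t^2/(c^2 n)}$. The lower-tail bound follows by applying the same argument to $-f$, which is also $c$-Lipschitz.

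The main obstacle is really just the verification of the bounded-difference property for the martingale, since this is the only step that uses the Lipschitz hypothesis in an essential way; some care is needed to argue that the coupling of the tail coordinates $X_{i+1}, \ldots, X_n$ preserves independence, which is handled by Fubini since the $X_j$ are independent. The remaining pieces (Hoeffding's lemma and the Chernoff optimization) are standard and contribute only bookkeeping.
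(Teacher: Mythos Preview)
Your proposal is correct and follows the standard Doob martingale plus Azuma--Hoeffding argument that is the usual proof of McDiarmid's inequality. Note, however, that the paper does not actually prove this lemma: it is stated as a preliminary result with a citation to \cite{mcdiarmid1989}, so there is no proof in the paper to compare against.
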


The following two results provide us with bipartite $d$-regular graphs that have ``optimal expansion''. However, for our purposes, we only need graphs with sufficiently good expansion. We use these results mainly for concreteness in the computation of implicit constants in our proofs. 
\begin{lemma}[\cite{brito2022}]\label{lem_eigenval_of_random_bipartite}
    Let $G$ be a uniformly random $d$-regular bipartite graph on $2n$ vertices. Then there exist a sequence of $\epsilon_n \to 0$ such that
    \[
        \lambda_2 \leq 2 \sqrt{d - 1}/d + \epsilon_n
    \]
    asymptotically almost surely as $n \to \infty$. 
\end{lemma}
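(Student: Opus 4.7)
The plan is to adapt the \emph{trace method} of Broder--Shamir and Friedman to the bipartite regular setting. Since any bipartite graph has spectrum symmetric about zero with $\lambda_1 = -\lambda_n = 1$ and nontrivial eigenvalues coming in $\pm$ pairs, bounding $\lambda_2$ is equivalent to bounding the spectral radius of $A$ restricted to the orthogonal complement of $\mathrm{span}\{\mathbf{1}, \mathbf{1}_L - \mathbf{1}_R\}$, where $L, R$ denote the two parts of the bipartition. To make the randomness tractable I would first replace the uniform model by the more combinatorially friendly \emph{permutation model}: sample $d$ independent uniformly random bijections $\pi_1, \dots, \pi_d \colon L \to R$ and set $G$ to be the union of the associated perfect matchings. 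Standard contiguity results between configuration-style and uniform models transfer any a.a.s.\ spectral property from one to the other, so this reduction is safe.

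The core of the argument is the trace method applied to a high even power. Fix an even integer $k = k(n)$ growing slowly with $n$ (of order $\log n$). Since $\lambda_2^{2k} \leq \mathrm{tr}\bigl((A - P)^{2k}\bigr)$ where $P$ is the orthogonal projector onto the trivial eigenspaces, it suffices to show
\[
    \mathbb{E}\bigl[\mathrm{tr}\bigl((A - P)^{2k}\bigr)\bigr] \;\leq\; n \bigl(2\sqrt{d - 1}/d\bigr)^{2k} (1 + o(1)).
\]
Expanding the trace yields a sum over closed walks of length $2k$ in the complete bipartite multigraph, each weighted by the probability that its prescribed edges are realized by some $\pi_i$. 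The dominant contribution comes from walks whose underlying graph is a tree; these correspond in a natural way to walks on the $d$-regular infinite tree, whose normalized spectral radius is exactly $2\sqrt{d - 1}/d$. Markov's inequality applied to $\lambda_2^{2k}$ then converts the expected-trace bound into the desired high-probability bound on $\lambda_2$, with $\epsilon_n \to 0$ at a rate dictated by the growth of $k$.

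The main obstacle is controlling the contribution of \emph{tangled} walks — walks whose underlying graph has excess more than one, i.e.\ contains two or more independent cycles. I would follow Bordenave's streamlining of Friedman's proof: first restrict attention to tangle-free walks, which can be analyzed cleanly through the non-backtracking operator and produce exactly the tree-of-growth $2\sqrt{d-1}/d$ contribution; then separately bound tangled walks by classifying closed walks according to the isomorphism type of their trace multigraph and counting walks of each type against the probabilistic weight supplied by the random matchings. Verifying, in the bipartite two-matching setting, that these weights still permit the standard telescoping and cancellations is the most delicate combinatorial step, and is where essentially all of the technical work of \cite{brito2022} is concentrated.
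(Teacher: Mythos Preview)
The paper does not prove this lemma at all: it is quoted as a black-box preliminary result from \cite{brito2022} and used only to guarantee the existence of bipartite $d$-regular graphs with $\lambda_2 \le 2\sqrt{d-1}/d + o(1)$. There is therefore no ``paper's own proof'' to compare against.

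Your sketch is a faithful high-level outline of the trace-method/non-backtracking argument that \cite{brito2022} (following Bordenave's approach to Friedman's theorem) actually carries out, so as a description of how the cited result is proved it is on the right track. For the purposes of \emph{this} paper, however, all of that machinery is unnecessary: the lemma is invoked only via its statement, and in fact the paper only ever uses the much weaker consequence that for any fixed $\lambda>0$ one can choose $d$ and then find infinitely many $d$-regular bipartite graphs with $\lambda_2 \le \lambda$. That weaker fact could equally well be supplied by \Cref{lem_ramanujan} or by any standard expander construction, so a full reproof of the random-bipartite spectral gap is overkill here.
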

The celebrated Alon-Boppana bound \cite{nilli1991} states that asymptotically $\lambda_2 \geq 2 \sqrt{d - 1}/d + o(1)$. Thus, the above result says that almost all bipartite $d$-regular graphs approach this bound. A similar result for random $d$-regular graphs is proved by Friedman in \cite{friedman2008}. 

Another seminal result by Marcus, Spielman, and Srivastava \cite{marcus2015} establishes the existence of $d$-regular bipartite graphs that achieve the Alon-Boppana bound, known as bipartite Ramanujan graphs.
\begin{lemma}[\cite{marcus2015}]\label{lem_ramanujan}
    For each $d \geq 3$, there exists an infinite sequence of $d$-regular bipartite graphs with
    \[
        \lambda_2 \leq 2 \sqrt{d - 1} / d.
    \]
\end{lemma}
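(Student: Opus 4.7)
The plan is to follow the method of interlacing families of polynomials of Marcus, Spielman, and Srivastava, producing $d$-regular bipartite Ramanujan graphs by iterated 2-lifts starting from the complete bipartite graph $K_{d,d}$. Recall that given a graph $H$, a \emph{2-lift} is a double cover $\widetilde{H}$ specified by a signing $s : E(H) \to \{\pm 1\}$, and the spectrum of $\widetilde{H}$ is the union as multisets of the spectrum of $A_H$ and the spectrum of the signed adjacency matrix $A_s$ whose $(u, v)$ entry is $s(uv)$ if $uv \in E(H)$ and $0$ otherwise. Thus the new eigenvalues introduced by the lift are exactly those of $A_s$. If $H$ is bipartite and $d$-regular, then so is $\widetilde{H}$, and $A_s$ has symmetric spectrum, so it suffices to exhibit a signing with $\lambda_{\max}(A_s) \leq 2\sqrt{d - 1}$.

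First I would compute the expected characteristic polynomial $\mathbb{E}_s[\det(xI - A_s)]$ over a uniformly random signing $s$. A Leibniz expansion, together with the fact that only permutations in which every edge appears an even number of times survive the expectation over independent $\pm 1$ signs, shows that the surviving contributions are indexed by involutions whose non-fixed points pair up along edges of $H$. After computing signs this yields the matching polynomial $\mu_H(x) = \sum_{k \geq 0} (-1)^k m_k(H) x^{n - 2k}$, where $m_k(H)$ counts the $k$-matchings of $H$. By the Heilmann--Lieb theorem, all roots of $\mu_H$ are real and bounded in absolute value by $2\sqrt{d - 1}$ for any $d$-regular $H$.

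Next I would show that the family $\{\det(xI - A_s)\}_{s \in \{\pm 1\}^{E(H)}}$ forms an \emph{interlacing family}: organize the signings into a binary tree indexed by partial signings, and verify at each internal node that the two children's averaged polynomials have a common real-rooted interlacer. A standard pigeonhole argument then guarantees the existence of a leaf whose polynomial has largest root at most the largest root of the root average, namely $\mu_H$. Combined with the Heilmann--Lieb bound, this produces a specific signing with $\lambda_{\max}(A_s) \leq 2\sqrt{d - 1}$. Iterating this construction starting from $K_{d,d}$ (whose nontrivial eigenvalues are $0$, so it is trivially Ramanujan) yields an infinite sequence of $d$-regular bipartite graphs on $2^k d$ vertices satisfying $\lambda_2 \leq 2\sqrt{d - 1}/d$.

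The main obstacle is establishing the interlacing family property, since it requires proving real-rootedness of every partial-average polynomial together with the existence of common interlacers at each branching. The cleanest route is through the theory of real stable polynomials: one expresses the characteristic polynomials $\det(xI - A_s)$ as specializations of a common multivariate real stable polynomial and observes that the $\pm 1$ averaging corresponds to a rank-one symmetric update, which can be shown to preserve stability and hence real-rootedness upon univariate restriction. Executing this is the technical heart of the argument; the Heilmann--Lieb step, while classical, is itself nontrivial and is handled by induction via the deletion recursion $\mu_H(x) = \mu_{H \setminus v}(x) - \sum_{u \sim v} \mu_{H \setminus \{u, v\}}(x)$.
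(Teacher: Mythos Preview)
Your outline is a faithful sketch of the Marcus--Spielman--Srivastava argument from \cite{marcus2015}. Note, however, that the paper does not supply its own proof of this lemma: it is quoted as a black-box preliminary result with a citation, so there is no ``paper's proof'' to compare against. Your proposal essentially reproduces the cited reference's method (iterated 2-lifts, expected characteristic polynomial equals the matching polynomial, Heilmann--Lieb root bound, and the interlacing-families pigeonhole to pass from the average to a single signing), which is appropriate if you intend to make the paper self-contained, but goes well beyond what the paper itself does.
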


\section{The exponential regime}\label{sec_exp}
In this section, we give constructions that prove \Cref{thm_exp,thm_bipartite_exp}. 

\subsection{Construction for \Cref{thm_exp}}
Fix $\delta < 1 - 1/q$ and let $\epsilon = 1 - 1/q - \delta$. Let $\alpha = 10^{-4}$. To prove \Cref{thm_exp}, we construct an infinite sequence of $3$-regular graphs $G_n$ with $\lambda_2(G_n) \leq 1 - \alpha$ such that for sufficiently large $n$, each graph $G_n$ has a strongly $\delta$-distinct set of $q$-colorings of size at least $e^{c|V(G_n)|}$, for some constant $c = c(\epsilon)$ to be specified. Such a construction shows that $(\delta, 1 - \alpha) \in R_{exp}$. 

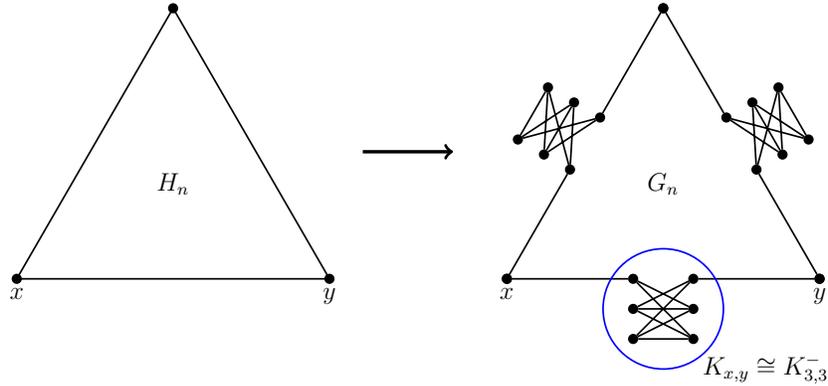
\begin{figure}[ht]
    \centering
    \scalebox{0.9}{
    \begin{tikzpicture}[thick, scale = 1]
        \tikzstyle{vertex}=[circle, draw, fill=black,inner sep=0pt, minimum width=4pt]
        \draw \foreach \x in {90, 210, 330} {
            (\x:3) node[vertex]{} -- (\x + 120: 3)
        };
        \node[below] at (210:3) {$x$};
        \node[below] at (330:3) {$y$};
        \node[below] at (0, 0.4) {$H_n$};
        \phantom{
        \draw \foreach \x in {30, 150, 270}{
            (\x - 60:3) node[vertex](x){}
            (\x + 60:3) node[vertex](y){}
            (\x: 1.5)
            ++(\x - 90:0.5) node[vertex](x1){}
            ++(\x: 0.5) node[vertex](x2){}
            ++(\x: 0.5) node[vertex](x3){}
            (\x: 1.5)
            ++(\x + 90:0.5) node[vertex](y1){}
            ++(\x: 0.5) node[vertex](y2){}
            ++(\x: 0.5) node[vertex](y3){}
            (x) -- (x1)
            (y) -- (y1)
            (x1) -- (y2)
            (x1) -- (y3)
            (x2) -- (y1)
            (x2) -- (y2)
            (x2) -- (y3)
            (x3) -- (y1)
            (x3) -- (y2)
            (x3) -- (y3)
        };
        \draw[blue] (0,-2) circle [radius=1];
        \node at (1.7,-3) {$K_{xy} \cong K_{3, 3}^-$};
        \draw (0, 0) -- (5, 0);
        }
    \end{tikzpicture}
    \begin{tikzpicture}[overlay]
          % Add arrow between the two pictures
          \draw[->,ultra thick] (-2,4) -- (-0.5,4);
    \end{tikzpicture}
    \begin{tikzpicture}[thick, scale = 1]
        \tikzstyle{vertex}=[circle, draw, fill=black,inner sep=0pt, minimum width=4pt]
        \draw \foreach \x in {30, 150, 270}{
            (\x - 60:3) node[vertex](x){}
            (\x + 60:3) node[vertex](y){}
            (\x: 1.5)
            ++(\x - 90:0.5) node[vertex](x1){}
            ++(\x: 0.5) node[vertex](x2){}
            ++(\x: 0.5) node[vertex](x3){}
            (\x: 1.5)
            ++(\x + 90:0.5) node[vertex](y1){}
            ++(\x: 0.5) node[vertex](y2){}
            ++(\x: 0.5) node[vertex](y3){}
            (x) -- (x1)
            (y) -- (y1)
            (x1) -- (y2)
            (x1) -- (y3)
            (x2) -- (y1)
            (x2) -- (y2)
            (x2) -- (y3)
            (x3) -- (y1)
            (x3) -- (y2)
            (x3) -- (y3)
        };
        \draw[blue] (0,-2) circle [radius=1];
        \node at (1.7,-3) {$K_{xy} \cong K_{3, 3}^-$};
        \node[below] at (210:3) {$x$};
        \node[below] at (330:3) {$y$};
        \node[below] at (0, 0.4) {$G_n$};
    \end{tikzpicture}
    }
    \caption{Illustration of the construction of $G_n$ from $H_n$.}
    \label{fig_exp_construction}
\end{figure}

First, let $H_n$ be a sequence of $3$-regular expander graphs with $\lambda_2(H_n) \leq 2\sqrt{2}/3$ given by \Cref{lem_ramanujan}. Let $K_{3, 3}^-$ denote the complete bipartite graph $K_{3, 3}$ with one edge removed and let $u, v$ be the two endpoints of the removed edge. We construct $G_n$ as follows. Start with the graph $H_n$. For each edge $xy \in E(H_n)$, we add a copy of $K_{xy} \cong K_{3, 3}^-$ to the vertex set of $G_n$. Remove the edge $xy$ and add edges $xu$ and $yv$. In other words, we replace each edge of $H_n$ with a $K_{3, 3}^-$ ``gadget'' that is connected to the two endpoints of the edge (see \Cref{fig_exp_construction}). It is easy to see that $G_n$ is still $3$-regular. For each edge of $H_n$, we added $6$ vertices. Thus, $G_n$ has $10 \cdot |V(H_n)|$ vertices. Write $H = H_n$, $G = G_n$, and let $N = |V(H_n)|$. We show that $G$ has the desired properties.

\begin{lemma}\label{lem_exp_construction_eigenval}
	Let $G = G_n$ be the graph defined as above. Then $\lambda_2 \leq 1 - \alpha$.
\end{lemma}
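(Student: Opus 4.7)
The plan is to establish \Cref{lem_exp_construction_eigenval} via Cheeger's inequality (\Cref{lem_cheeger}). Since $G_n$ is $3$-regular, it suffices to show that the edge expansion ratio satisfies $h(G_n) \geq c_0$ for some constant $c_0 > 3\sqrt{2}/100$, since then $1 - \lambda_2(G_n) \geq h(G_n)^2/18 \geq 10^{-4} = \alpha$.

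To bound $h(G_n)$, I fix any $S \subseteq V(G_n)$ with $1 \leq |S| \leq 5N$, and set $T = S \cap V(H_n)$, $t = |T|/N$, and $k_e = |S \cap V(K_{xy})|$ for each $e = xy \in E(H_n)$. Since $\lambda_2(H_n) \leq 2\sqrt{2}/3$ by \Cref{lem_ramanujan}, \Cref{lem_cheeger} gives $h(H_n) \geq (3 - 2\sqrt{2})/2$, which is the only global input I need about $H_n$.

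I split the analysis into two regimes depending on how balanced $T$ is inside $V(H_n)$. In the balanced regime $\epsilon_1 \leq t \leq 1 - \epsilon_1$ for a small constant $\epsilon_1 > 0$, edge expansion of $H_n$ produces at least $\epsilon_1 h(H_n) N$ edges $xy \in E(H_n)$ with $x \in T$ and $y \notin T$. Each such edge is replaced in $G_n$ by a gadget $K_{xy}$ attached via external edges $xu, yv$, which together form a path from $x$ to $y$; since $x$ and $y$ lie on opposite sides of $S$, the path must contain at least one edge of $E(S, \overline{S})$, yielding $|E(S, \overline{S})| \geq \epsilon_1 h(H_n) N$. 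In the imbalanced regime $t < \epsilon_1$ (with $t > 1 - \epsilon_1$ handled symmetrically), most of $S$ lies in gadgets. For each edge $e = xy$ with $x, y \notin T$ (excluding at most $3\epsilon_1 N$ gadgets), the connectedness of $K_{3,3}^-$ forces at least one internal edge of $K_{xy}$ to cross $S$ whenever $k_e \in \{1, \ldots, 5\}$, and $k_e = 6$ forces both external edges $xu, yv$ to cross. A simple pigeonhole argument yields at least $|S \setminus T|/6 - 3\epsilon_1 N$ gadgets contributing at least one edge to $E(S, \overline{S})$, giving a bound of the form $|E(S, \overline{S})| \geq |S|/6 - O(\epsilon_1 N)$.

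Choosing $\epsilon_1$ to be a small absolute constant (of order $10^{-3}$) then balances both regimes and yields $h(G_n) \geq c_0$ as required. The main obstacle I anticipate is the bookkeeping in the imbalanced regime, especially for $|S|$ comparable to $\epsilon_1 N$, where the two error terms $|S|/6$ and $O(\epsilon_1 N)$ are of similar magnitude and the estimates must be combined carefully; the generous slack afforded by the very small target $\alpha = 10^{-4}$ should make the constants multiply out without trouble.
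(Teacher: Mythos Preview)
Your overall plan---reduce to an edge-expansion lower bound via Cheeger and then analyse $S$ by separating its $H$-part $T$ from its gadget part---matches the paper's. The paper, however, begins with a preprocessing step that you omit and that does real work: before any casework it strips every \emph{partial} gadget from $S$ (those with $1\le k_e\le 5$), using that a nonempty proper subset of $K_{3,3}^-$ has internal edge boundary at least~$2$ while contributing at most~$5$ vertices to $S$, so deleting it can only lower the ratio $|E(S,\overline S)|/|S|$. After this reduction every gadget has $k_e\in\{0,6\}$, the boundary is given exactly by $|E(S,\overline S)| = 3|S_H| - 2k_1 + 2k_3$ in terms of how many endpoints of each full gadget lie in $S_H$, and the remaining casework on $|S_H|$ is short and works uniformly over all sizes of $S$.

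Without this reduction, your imbalanced-regime argument has a genuine gap. The bound $|E(S,\overline S)|\ge |S|/6 - O(\epsilon_1 N)$ is not merely delicate for $|S|\sim\epsilon_1 N$; it is \emph{negative}, hence vacuous, whenever $|S|$ is below a fixed multiple of $\epsilon_1 N$, and this entire range falls in the imbalanced regime (since $|T|\le |S|$ already forces $t<\epsilon_1$ there). For a concrete failure, take $S$ to be a single $H$-vertex together with its three adjacent gadgets in full: then $|S|=19$, $|T|=1$, every gadget you would count is excluded because it touches $T$, and your pigeonhole estimate yields nothing---yet $|E(S,\overline S)|=3$. No choice of $\alpha$ rescues a negative lower bound. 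You need either a separate small-$|S|$ analysis (for instance, crediting each full gadget with the external edge to its non-$T$ endpoint, and each vertex of $T$ with its external edges to empty gadgets), or the paper's partial-gadget reduction, which sidesteps the issue entirely.
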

\begin{proof}
    Let $S \subset V$ be any vertex subset with $|S| \leq N/2$. We will show that 
	\[
		\frac{|E(S, V \setminus S)|}{|S|} \geq 3 \sqrt{2 \alpha}.
	\]
    By \Cref{lem_cheeger}, this then implies $\lambda_2 \leq 1 - \alpha$.
	
	For each gadget $K_{xy}$, if $0 < |S \cap V(K_{xy})| < 6$, then this gadget contributes at least $2$ edges to $E(S, V \setminus S)$, whereas it contributes at most $5$ vertices to $|S|$. Since, by our choice of $\alpha$, we have $3 \sqrt{2 \alpha} < 2/5$, $S$ cannot be a union of such ``incomplete gadgets.'' Now let $S'$ be obtained from $S$ by deleting vertices of $S$ in each incomplete gadget. Then $S'$ is nonempty and each incomplete gadget reduces $E(S', V \setminus S')$ by at least $1$ and $|S'|$ by at most $5$. If $k$ is the number of incomplete gadgets in $S$, then
	\[
	\frac{|E(S', V \setminus S')|}{|S'|} \leq \frac{|E(S, V \setminus S)| - k}{|S| - 5k} \leq \frac{|E(S, V \setminus S)|}{|S|}
	\]
	since $3 \sqrt{2 \alpha} < 1/5$. Thus, we may assume that $S$ has no incomplete gadgets. 
	
	Let $k_1$ be the number of gadgets $K_{xy} \subset S$ such that $x, y \in S$. Let $k_2$ be the number of gadgets $K_{xy} \subset S$ such that exactly one of $x, y$ is in $S$. Let $k_3$ be the number of gadgets $K_{xy} \subset S$ such that $x, y \notin S$. Then letting $S_H = S \cap V(H)$,
	\[
		\frac{|E(S, V \setminus S)|}{|S|} = \frac{3|S_H| - 2k_1 + 2k_3}{|S_H| + 6k_1 + 6k_2 + 6 k_3}.
	\]
    If the left hand side is at least $1/3$, we are already done. Otherwise, removing the $k_3$ gadgets decreases the edge expansion of $S$. Thus, we can assume $k_3 = 0$.
	
	Suppose $|S_H| \geq 3N/5$. Since $|S| = |S_H| + 6k_1 + 6k_2 \leq |V|/2 = 5N$, we have that $2k_1 \leq 2k_1 + 2k_2 \leq \frac{5}{3} N - \frac{1}{3}|S_H|$. Thus, 
	\[
		\frac{|E(S, V \setminus S)|}{|S|} \geq \frac{\frac{10}{3}|S_H| - \frac{5}{3} N}{5 N} \geq \frac{1}{15} \geq 3 \sqrt{2 \alpha}.
	\]
	
	Thus we must have $|S_H| \leq 3N/5$. Let $\overline{S_H} = V_H \setminus S_H$. Observe that $k_1$ is exactly equal to the number of edges of $H$ between vertices of $S_H$ when viewing it as a subset of $V(H)$; that is, $k_1 = E_H(S_H)$. Similarly, $k_2 = E_H(S_H, \overline{S_H})$. Since $H$ is $3$-regular, $3|S_H| = 2 E_{H}(S_H) + E_H(S_H, \overline{S_H})$. Thus, 
	\[
		3 |S_H| = 2 k_1 + k_2. 
	\]
	Then
	\begin{align*}
		\frac{|E(S, V \setminus S)|}{|S|}
		&\geq \frac{3|S_H| - 2k_1}{|S_H| + 12k_1 + 6k_2}\\
		&\geq \frac{E_{H}(S_H, \overline{S_H})}{19|S_H|}\\
		&\geq \frac{h(H)}{19} \min \left\{1, \frac{|\overline{S_H}|}{|S_H|} \right\}\\
		&\geq \frac{2h(H)}{57}
	\end{align*}
	By our choice of $H$ and \Cref{lem_cheeger}, this is at least $\geq \frac{2\sqrt{2}}{57} \geq 3 \sqrt{2 \alpha}$. Thus, $h(G) \geq 3 \sqrt{2 \alpha}$ and \Cref{lem_cheeger} implies that $\lambda_2 \leq 1 - \alpha$. 
\end{proof}

\begin{lemma}\label{lem_exp_construction_colorings}
	Let $G = G_n$ be the graph defined as above with $|V(G)| = 10|V(H)| = 10N$. Then for $n$ sufficiently large, $G$ has a strongly $\delta$-distinct set of $q$-colorings of size at least $e^{cN}$, where $c = \epsilon^2/8$ and $\epsilon = 1 - 1/q - \delta$. 
\end{lemma}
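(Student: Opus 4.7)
The plan is to prove the bound by the probabilistic method: sample $M = e^{cN}$ random proper $q$-colorings of $G$ from a suitable distribution, and use McDiarmid's inequality (\Cref{lem_bounded_diff}) together with a union bound to show that, with positive probability, every pair is $\delta$-distinct.

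\smallskip

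The key structural fact is that the gadgets decouple: once a proper coloring $X_H$ of $H$ is fixed, each gadget $K_{xy}$ can be colored independently, subject only to the external constraints at $x$ and $y$. I would therefore sample $X_H$ from some sufficiently symmetric distribution on proper $q$-colorings of $H$, and then, conditional on $X_H$, independently sample each gadget uniformly among the proper $q$-colorings of the gadget compatible with $X_H(x)$ and $X_H(y)$. By the $S_q$-action on colorings, the marginal color distribution at every vertex of $G$ is uniform on $[q]$ under a symmetric choice, so for any two i.i.d.\ samples $X, Y$ and any permutation $\sigma \in S_q$, we have $\E[d_H(X, \sigma(Y))] = (1 - 1/q)n$.

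\smallskip

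To pass from expectation to a tail bound I would condition on $(X_H, Y_H)$: the gadget colorings are then independent across the $3N/2$ gadgets on each side, and changing one gadget alters $d_H(X, \sigma(Y))$ by at most $6$. McDiarmid's inequality gives
\[
\Pr\bigl[d_H(X, \sigma(Y)) \leq \E[d_H(X, \sigma(Y)) \mid X_H, Y_H] - \tfrac{\epsilon}{4} n \,\bigm|\, X_H, Y_H\bigr] \leq \exp\!\bigl(-\Omega(\epsilon^2 N)\bigr).
\]
A separate Chernoff-type bound on the vertex-coincidence count $|\{v \in V(H) : X_H(v) = \sigma(Y_H(v))\}|$ handles the variation of the conditional expectation over $(X_H, Y_H)$ and $\sigma$. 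Combined, these give $\Pr[d_H(X, \sigma(Y)) < (1 - 1/q - \epsilon/2)n] \leq \exp(-\Omega(\epsilon^2 N))$. Union-bounding over the $q!$ permutations and the $\binom{M}{2}$ pairs then yields that with positive probability there are $M = e^{cN}$ pairwise $\delta$-distinct colorings, provided $c = \epsilon^2/8$.

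\smallskip

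The main obstacle will be establishing the conditional expectation bound, which boils down to choosing the distribution on $X_H$ correctly. A fixed two-coloring of $H$ is too rigid, since an adversarial $\sigma$ can make $\sigma(Y_H) = X_H$ and cancel the entire $V(H)$ contribution — capping the achievable $\delta$ well below $1 - 1/q$. The distribution on $X_H$ must therefore explore enough of $[q]$ that no single $\sigma$ can match it to $Y_H$ on more than roughly $N/q$ vertices of $V(H)$. Executing this — while retaining sufficient independence structure for McDiarmid on the $X_H$-coordinate — is the delicate part; uniform sampling of proper $q$-colorings of $H$ is the natural candidate, and its analysis will use the bipartite regularity of $H$ (and, for small $q$, the specific flexibility afforded by the $K_{3,3}^-$ gadget).
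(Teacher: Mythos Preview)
Your plan has the right architecture---probabilistic method, McDiarmid, union bound over $q!$ permutations and $\binom{M}{2}$ pairs---but you have overlooked the one structural fact that makes the argument go through cleanly, and as a result you have manufactured an obstacle that does not exist.

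In the construction of $G$ from $H$, the edge $xy$ of $H$ is \emph{removed} and replaced by the path $x\!-\!u\!-\!\cdots\!-\!v\!-\!y$ through the gadget. Consequently, in $G$ no two vertices of $V(H)$ are adjacent: the restriction $X|_{V(H)}$ need not be a proper coloring of $H$ at all. The paper exploits this by coloring the $N$ vertices of $V(H)$ independently and uniformly at random in $[q]$, and then coloring each gadget $K_{xy}$ \emph{deterministically} as a function of $(X(x),X(y))$ (with an explicit rule covering both $X(x)\neq X(y)$ and $X(x)=X(y)$). One checks that this always yields a proper $q$-coloring of $G$ and that every vertex has uniform marginal, so $\E[d_H(X,\sigma(Y))]=(1-1/q)\cdot 10N$. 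The whole coloring of $G$ is now a function of the $N$ i.i.d.\ colors on $V(H)$; for a pair $(X,Y)$ one has $2N$ independent inputs, and changing one input perturbs at most $1+3\cdot 6=19$ vertex colors, so McDiarmid with Lipschitz constant $19$ applies directly and gives $\Pr[d_H(X,\sigma(Y))\le \delta\cdot 10N]\le \exp(-\epsilon^2 N/3.9)$. A single union bound over $\sigma\in S_q$ and over pairs finishes.

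Your ``main obstacle''---selecting a distribution on proper $q$-colorings of $H$ with enough independence for McDiarmid---simply evaporates once you realize $X_H$ is unconstrained. Sampling uniformly from proper colorings of $H$ and then analyzing concentration via the bipartite structure would be substantially harder (the coordinates are genuinely dependent), and your sketch does not indicate how that analysis would go. Similarly, randomizing the gadgets conditional on $X_H$ is unnecessary; making them deterministic functions of the $H$-colors is what gives the clean $19$-Lipschitz bound on $2N$ independent variables and avoids the two-stage conditional argument you outline.
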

\begin{proof}
	Recall that $V(G) = V(H) \cup \bigcup_{xy \in E(H)}V(K_{xy})$, where the gadget $K_{xy}$ is a copy of $K_{3, 3}^{-}$. Consider the following random $q$-coloring of $G$. First color each vertex in $V(H)$ independently and uniformly at random. For each $xy \in E(H)$, we have two cases. If $x$ and $y$ are assigned the same color, say $i$, then color one part of $K_{xy}$ by $i + 1 \mod q$ and the other part by $i + 2 \mod q$. If $x$ and $y$ are assigned different colors $i$ and $j$ respectively, then color the part of $K_{xy}$ with an edge to $x$ by $j$ and the other part by $i$. 

    Observe that under this random coloring, the marginal distribution of the color of any vertex is uniform. Thus, for $X, Y$ independent samples of this random coloring, we have
    \[
        \E[d_c(X, Y)] = \min_{\sigma \in S_q}\E[d_H(X, \sigma(Y))] = \left(1 - \frac{1}{q}\right) |V(G)|.
    \]
    For any $\sigma \in S_q$, let $x_1, \ldots, x_N$ be the colors of the vertices in $H$ under $X$ and let $y_1, \ldots, y_N$ be the colors of the vertices in $H$ under $\sigma(Y)$. Since the coloring is uniquely determined by the colors of vertices in $H$, we can write
    \[
        d_H(X, \sigma(Y)) = f_{\sigma}(x_1, \ldots, x_N, y_1, \ldots, y_N). 
    \]
    Observe that changing one input of $f$ changes the colors of at most $1 + 3 \cdot 6 = 19$ vertices; namely, one vertex in $H$ and the vertices of the three gadgets adjacent to it. Thus, $f_{\sigma}$ is $19$-Lipschitz. By \Cref{lem_bounded_diff}, 
    \[
        \Pr[d_H(X, \sigma(Y)) \leq \delta |V(G)|]
        \leq \exp{\left( \frac{-2 (10 \epsilon N)^2}{19^2 \cdot 2N} \right)}
        \leq \exp{\left( \frac{-\epsilon^2 N}{3.9} \right)}.
    \]
    By union bound, we have that 
    \[
        \Pr[d_c(X, Y) \leq \delta |V(G)|] \leq q! \exp{\left( \frac{-\epsilon^2 N}{3.9} \right)}.
    \]
    By union bound again, for a collection $C$ of $e^{cN}$ i.i.d. samples of the random colorings where $c = \epsilon^2/8$,
    \[
        \Pr[C \text{ is strongly $\delta$-distinct}] \geq 1 - |C|^2 q! \exp{\left( \frac{-\epsilon^2 N}{3.9} \right)} > 0
    \]
    for $N$ sufficiently large. 
\end{proof}

\begin{proof}[Proof of \Cref{thm_exp}]
	\Cref{lem_exp_construction_eigenval,lem_exp_construction_colorings} together show that $(\delta, 1 - \alpha) \in R_{g, exp}$ for $\alpha = 10^{-4}$. 
\end{proof}

\subsection{Construction for \Cref{thm_bipartite_exp}}
For any $\lambda > 0$, choose $d$ with $2/\sqrt{d} \leq \lambda$. Let $G = G_n$ be a sequence of uniformly random bipartite $d$-regular graphs on $2n$ vertices. By \Cref{lem_eigenval_of_random_bipartite}, with probability $1 - o(1)$, each $G_n$ has $\lambda_2 \leq \lambda$. Let $\delta = 1 - \frac{1}{2} \left( \frac{1}{\lfloor \frac{q}{2} \rfloor} + \frac{1}{\lceil \frac{q}{2} \rceil} \right)$. We show that some random coloring scheme of $G$ produces colorings at distance exceeding $\delta$ with high probability.
\begin{lemma}\label{lem_d_XY}
    Let $G$ be a uniformly random $d$-regular bipartite graph on $2n$ vertices. Then, there exists a random coloring scheme of $G$ such that for any pair of independent samples of this coloring, $X$ and $Y$, there exist constants $\tau = \tau(d, q) > 0$ and $c = c(d, q) > 0$ such that 
    \[
        \Pr \left[d_c(X, Y) < \left( \delta + \tau^2 \right) 2n \right] \leq e^{-cn}
    \]
    for sufficiently large $n$.
\end{lemma}
\begin{proof}
    We define the random coloring as follows. Let $\tau = \tau(d, q) > 0$ be a parameter to be determined. Let the two parts of $G$ be denoted by $V_1$ and $V_2$. Let $C_1 = \{1, \ldots, \lfloor q/2\rfloor\}$ and $C_2 = [q]\setminus C_1$. Start with a coloring of $G$ where we color vertices of $V_1$ by colors in $C_1$ uniformly and independently at random, and color the vertices of $V_2$ by colors in $C_2$ uniformly and independently at random. This is clearly a proper coloring. Now, for each vertex of $V_1$, independently replace its color by $q$ with probability $\tau$, and let $Q \subset V_1$ denote the set of vertices recolored this way. The coloring may now be invalid because some neighbors of $Q$ could also have color $q$. We then recolor all neighbors of $Q$, $N(Q) \subset V_2$, by the color $q - 1$. Since $q - 1 \notin C_1$, we now have a proper coloring. 

    We first give the intuition behind this coloring scheme. Before doing the recoloring, two such independent random colorings have expected Hamming distance exactly equal to $\delta (2n)$. In order to make the linear gain, we color a small linear fraction of $V_1$ by $q$. On expectation, the collision between the recolored vertices of the two colorings is very small. On the other hand, the remaining recolored vertices are guaranteed to have a different color under the two colorings, producing the excess we need. 

    Let $X$ and $Y$ be two independent samples of this random coloring. Let $Q_X, Q_Y \subset V_1$ denote the sets of vertices recolored $q$ by $X$ and $Y$ respectively. Let $\sigma \in S_q$. We break down $d_H(X, \sigma(Y))$ and consider two cases: $\sigma(C_1) = C_1$ or $\sigma(C_1) \neq C_1$. 
    \begin{enumerate}
        \item Suppose $v \in V_1 \setminus (Q_X \cup Q_Y)$. 
        \begin{enumerate}
            \item If $\sigma(C_1) = C_1$, then $\Pr[X(v) = \sigma(Y(v))] = \frac{1}{|C_1|}$. On expectation, the Hamming distance that these vertices contribute is $(n - |Q_X \cup Q_Y|) (1 - 1/|C_1|)$.
            \item If $\sigma(C_1) \neq C_1$, then $\Pr[X(v) = \sigma(Y(v))] \leq 1/|C_1| - 1/|C_1|^2$. On expectation, the Hamming distance that these vertices contribute is $(n - |Q_X \cup Q_Y|) (1 - 1/|C_1| + 1/|C_1|^2)$.
        \end{enumerate}
        \item Suppose $v \in V_2 \setminus (N(Q_X) \cup N(Q_Y))$.
        \begin{enumerate}
            \item If $\sigma(C_1) = C_1$, then $\Pr[X(v) = \sigma(Y(v))] = \frac{1}{|C_2|}$. On expectation, the Hamming distance that these vertices contribute is $(n - |N(Q_X) \cup N(Q_Y)|) (1 - 1/|C_2|)$.
            \item If $\sigma(C_1) \neq C_1$, then $\Pr[X(v) = \sigma(Y(v))] \leq 1/|C_2| - 1/|C_2|^2$. On expectation, the Hamming distance that these vertices contribute is $(n - |N(Q_X) \cup N(Q_Y)|) (1 - 1/|C_2| + 1/|C_2|^2)$.
        \end{enumerate}
        \item Suppose $v \in Q_X \triangle Q_Y$. 
        \begin{enumerate}
            \item If $\sigma(C_1) = C_1$, then $\Pr[X(v) = \sigma(Y(v))] = 0$. The Hamming distance that these vertices contribute is $|Q_X \triangle Q_Y|$. 
        \end{enumerate}
        \item Suppose $v \in N(Q_X) \triangle N(Q_Y)$. 
        \begin{enumerate}
            \item If $\sigma(C_1) = C_1$, then $\Pr[X(v) = \sigma(Y(v))] = \frac{1}{|C_2|}$. On expectation, the Hamming distance that these vertices contribute is $|N(Q_X) \triangle N(Q_Y)| (1 - 1/|C_2|)$. 
        \end{enumerate}
    \end{enumerate}
    When $\sigma(C_1) \neq C_1$, the first two cases already give us 
    \begin{align*}
        \E[d_H(X, \sigma(Y))]
        &\geq n \left(1 - \frac{1}{|C_1|} \right) + n \left(1 - \frac{1}{|C_2|} \right) + n \left( \frac{1}{|C_1|^2} + \frac{1}{|C_2|^2} \right) \\
        &\quad - |Q_X \cup Q_Y| - |N(Q_X) \cup N(Q_Y)|\\
        &\geq 2 \delta n + \frac{2}{q^2} n - |Q_X \cup Q_Y| - |N(Q_X) \cup N(Q_Y)|.
    \end{align*}
    Since $Q_X$ and $Q_Y$ are chosen randomly and since $G$ is a random $d$-regular bipartite graph, $|Q_X \cup Q_Y| \approx 2 \tau n$ and $|N(Q_X) \cup N(Q_Y)| \approx 2 d \tau n$. We can thus choose $\tau$ small enough such that by \Cref{lem_bounded_diff}, there is some constant $c_1 = c_1(\tau) > 0$ such that with probability at least $1 - e^{c_1 n}$, 
    \[
    d_H(X, \sigma(Y)) \geq 2 \delta n + d \tau n.
    \]

    When $\sigma(C_1) \neq C_1$, we have
    \begin{align*}
        &\E[d_H(X, \sigma(Y))]\\
        &\geq n \left(1 - \frac{1}{|C_1|} \right) + n \left(1 - \frac{1}{|C_2|} \right) - |Q_X \cup Q_Y| \left(1 - \frac{1}{|C_1|} \right) \\
        &\quad - |N(Q_X) \cup N(Q_Y)| \left(1 - \frac{1}{|C_2|} \right) + |Q_X \triangle Q_Y| + |N(Q_X) \triangle N(Q_Y)| \left(1 - \frac{1}{|C_2|} \right)\\
        &\geq 2 \delta n + \left(|Q_X \triangle Q_Y| - |Q_X \cup Q_Y| \left(1 - \frac{1}{|C_1|} \right) \right) - |N(Q_X) \cap N(Q_Y)|.
    \end{align*}
    Again, we have that $|Q_X \triangle Q_Y| \approx 2 \tau n - \tau^2 n$, $|Q_X \cup Q_Y| \approx 2 \tau n$ and $|N(Q_X) \cap N(Q_Y)| \approx d^2 \tau^2 n$. Thus, we can still choose $\tau$ such that by \Cref{lem_bounded_diff}, there is some constant $c_2 = c_2(\tau) > 0$ such that with probability at least $1 - e^{c_2 n}$, 
    \[
        d_H(X, \sigma(Y)) \geq 2 \delta n + 2 \tau^2 n. 
    \]
    
    Finally, by union bound, 
    \[
        \Pr \left[d_c(X, Y) < \left( \delta + \tau^2 \right) 2n \right] \leq q! e^{-c_2 n} \leq e^{-cn}.
    \]
    for sufficiently large $n$.
\end{proof}

Now we are in a position to prove \Cref{thm_bipartite_exp}. 
\begin{proof}[Proof of \Cref{thm_bipartite_exp}]
    Let $\tau = \tau(d, q)$ and $c = c(d, q)$ be parameters such that \Cref{lem_d_XY} holds. For a collection $C$ of i.i.d. samples of the random colorings of $G$ as in \Cref{lem_d_XY}, union bound gives
    \[
        \Pr[C \text{ is strongly $(\delta + \tau^2)$-distinct}] \geq 1 - |C|^2 e^{-cn}. 
    \]
    We also need $G$ to have $\lambda_2 \leq \lambda$. This happens with probability $1 - o(1)$ by \Cref{lem_eigenval_of_random_bipartite}. Therefore, for all sufficiently large $n$, we can find some bipartite $d$-regular graph $G$ on $2n$ vertices with $\lambda_2 \leq \lambda$ that has a $(\delta + \tau^2)$-distinct set of colorings of exponential size. 
\end{proof}

\section{The constant regime}\label{sec_const_regime}
\Cref{thm_bipartite_exp} shows that the exponential regime for $g$ (and therefore for $f$ as well) covers all points $(\delta, \lambda)$ satisfying $\delta \leq 1 - \frac{1}{2} \left( \frac{1}{\lfloor \frac{q}{2} \rfloor} + \frac{1}{\lceil \frac{q}{2} \rceil} \right) + \epsilon$ for some $\epsilon = \epsilon(\lambda)$. The threshold here is roughly achieved by bipartite graphs, coloring one part randomly using half of the colors and the other part randomly using the rest of the colors. We show in this section that this threshold is essentially tight, and furthermore that there is a sharp phase transition at $\lambda = 0$ from the exponential regime to the constant regime for both $f$ and $g$. However, \Cref{thm_bipartite_exp} and \Cref{thm_const}~(1) still leave a gap (which vanishes as $\lambda \to 0$) between the exponential regime and the constant regime and it is not known whether a different growth behavior occurs within this gap (see \Cref{fig_regimes}). 

In this section, we also prove \Cref{thm_const}~(2), which establishes a sharp boundary point for the constant regime for $f$ and $g$ on the line $\delta = 1 - 1/q$. Unlike the first part of the theorem, this part, along with \Cref{thm_exp}, still leaves a constant size gap between the exponential regime and the constant regime for both $f$ and $g$ on this line (see \Cref{fig_regimes}), which presents an interesting open problem.

\subsection{Proof of \Cref{thm_const}~(1)}\label{subsec_4.1}
We prove the following stronger result, which implies the first part of \Cref{thm_const}.
\begin{theorem}\label{thm_strong_const}
	For any $\epsilon > 0$, there exists some $\lambda = \lambda(\epsilon) > 0$ and some integer $K = K(\epsilon)$ such that for $\delta = 1 - \frac{1}{2} \left( \frac{1}{\lfloor \frac{q}{2} \rfloor} + \frac{1}{\lceil \frac{q}{2} \rceil} \right) + \epsilon$, we have
	\[
		\sup_{d \geq 0} \limsup_{n \to \infty} f_{\delta, \lambda, d}(n) \leq K.
	\]
\end{theorem}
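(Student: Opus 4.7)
The plan is to associate every proper $q$-coloring $X$ of $G$ (with $\lambda_2(G) \leq \lambda$) to a discrete ``type'' drawn from a finite set of size at most $K(q)$, in such a way that any two colorings of the same type are at distance at most $\delta^* + \beta(\lambda)$, where $\delta^* = 1 - \frac{1}{2}\bigl(\frac{1}{\lfloor q/2 \rfloor} + \frac{1}{\lceil q/2 \rceil}\bigr)$ is the bipartite threshold and $\beta(\lambda) \to 0$ as $\lambda \to 0$. Choosing $\lambda = \lambda(\epsilon)$ small enough that $\beta(\lambda) < \epsilon$ will force each type to contain at most one member of any $(\delta^* + \epsilon)$-distinct set of colorings, yielding the desired uniform bound $K(q)$.

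For the structural step I would use the following spectral identity. Since each color class $V_i$ is independent, $\mathbf{1}_{V_i}^T A \mathbf{1}_{V_i} = 0$; writing $\mathbf{1}_{V_i} = p_i \mathbf{1} + f_i$ with $p_i = |V_i|/n$ and $f_i \perp \mathbf{1}$, and expanding $f_i$ in an orthonormal eigenbasis $v_2, \ldots, v_n$ of $A$ on $\mathbf{1}^\perp$, the identity becomes
\[
    \sum_{k \geq 2} \lambda_k \langle f_i, v_k \rangle^2 = -n p_i^2.
\]
The hypothesis $\lambda_2 \leq \lambda$ bounds the positive-eigenvalue contribution by $\lambda \|f_i\|^2 \leq \lambda n$, so the non-positive eigenspaces of $A$ must absorb mass at least $n p_i^2 - \lambda n$. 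As $\lambda \to 0$, the centered indicator $f_i$ therefore concentrates on the non-positive spectrum of $A$. In the near-bipartite regime where $\lambda_n$ is close to $-1$, this pins $f_i$ to essentially a multiple of the bottom eigenvector $v_n$, which encodes a canonical bipartition $V = A \sqcup B$ of $G$; consequently each color class lies almost entirely in $A$ or almost entirely in $B$. I would then define the type $\tau(X) : [q] \to \lbrace A, B \rbrace$ by recording which side each color class predominantly occupies. Two colorings of the same type then reduce to independent samples of the bipartite coloring analyzed in the proof of \Cref{lem_d_XY}, whose expected distance is exactly $\delta^*$, plus a small error from the spectral approximation.

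The number of types, quotiented by the $S_q$ action on colors and by the $A \leftrightarrow B$ symmetry of the canonical bipartition, is a finite constant $K(q)$. For $q = 3$ a case analysis shows that only the two signatures $(|S_A|, |S_B|) = (1, 2)$ and $(2, 1)$ remain pairwise $(\delta^* + \epsilon)$-distinct, giving $K(3) = 2$; the remaining type pairs have mutual distance at most $\delta^*$ and so cannot coexist in a $\delta$-distinct set. In the complementary regime where $|\lambda_n|$ is bounded away from $1$, the same identity instead forces $p_i \leq |\lambda_n|/(1 + |\lambda_n|)$ for every color class, which combined with $\sum_i p_i = 1$ forces every coloring to be near-balanced; a Hoffman-type rigidity argument (now applicable since $G$ has two-sided expansion in this regime) then shows $G$ is close to a complete $q$-partite graph and admits essentially a unique proper $q$-coloring up to $S_q$, again respecting the bound $K(q)$.

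The hard part will be making the structural lemma quantitative and uniform, especially in the intermediate regime where $|\lambda_n|$ is neither very close to $1$ nor bounded strictly below it by a fixed amount. My plan is to introduce an adaptive spectral threshold $\eta = \eta(\epsilon)$, split the negative spectrum of $A$ into a ``near-$-1$'' part (on which $f_i$ concentrates and which defines the canonical bipartition) and a ``moderate'' part (whose contribution is small in $\ell^2$ norm), and carefully track the $\ell^2$ error to show that the type assignment remains unambiguous. A secondary step is to establish the within-type distance bound with an explicit error budget, which should follow from combining \Cref{lem_bounded_diff} with a coupling between a coloring and its canonical bipartite model.
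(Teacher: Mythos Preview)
Your approach is genuinely different from the paper's, and it has a real gap. You try to assign each coloring a type relative to a \emph{canonical bipartition} of $G$ read off from the bottom eigenvector, with a case split on whether $\lambda_n$ is near $-1$ (near-bipartite) or bounded away from $-1$ (Hoffman-rigid, hence near complete $q$-partite). The paper never looks at $\lambda_n$ at all. Instead it fixes $K$ colorings $X_1,\ldots,X_K$ at once, considers the product map $v\mapsto(X_1(v),\ldots,X_K(v))\in[q]^K$, and uses only $\lambda_2$ (via \Cref{lem_indep_set}, iterated as in \Cref{lem_partition}) to merge the large fibres into $t$ nearly independent blocks $S_1,\ldots,S_t$ with $2\le t\le q$. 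Each coloring $X_j$ then induces an ordered composition of $q$ into $t$ parts; since there are at most $\binom{q-1}{t-1}\le\binom{q-1}{\lfloor(q-1)/2\rfloor}$ such compositions, pigeonhole with $K=\binom{q-1}{\lfloor(q-1)/2\rfloor}+1$ finds two colorings with identical block sizes, and a block-respecting permutation shows those two are closer than $\delta$.

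The gap in your plan is that the dichotomy on $\lambda_n$ is not exhaustive, and the ``intermediate regime'' you flag as hard is exactly where the content lies. One-sided expansion gives no control over the negative spectrum: your identity $\sum_{k\ge 2}\lambda_k\langle f_i,v_k\rangle^2=-np_i^2$ forces mass of $f_i$ onto the non-positive eigenspace, but not onto any single eigenvector. Concretely, for $q=4$ take $G=K_{m,m,m}$: here $\lambda_2=0$ and $\lambda_n=-\tfrac12$ with multiplicity $2$, so there is no bipartition to extract from $v_n$, and Hoffman rigidity does not make $G$ close to complete $4$-partite---it is exactly complete $3$-partite, with three essentially different $4$-colorings according to which part receives two colors. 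Neither arm of your case split handles this graph, and your appeal to \Cref{lem_d_XY} is in any case in the wrong direction (that lemma lower-bounds distance for random colorings, not upper-bounds it for arbitrary ones). The paper's argument handles this example automatically: the blocks are the three parts, $t=3$, and the pigeonhole runs over the $\binom{3}{2}=3$ compositions of $4$. Allowing $t$ to range over $2,\ldots,q$, and building the blocks from the $K$ colorings rather than from a global spectral feature of $G$, is the idea your plan is missing.
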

In other words, for any gap $\epsilon > 0$, we can find $\lambda > 0$ such that regular graphs with $\lambda_2 \leq \lambda$ exhibit a ``jump'' from having exponentially many strongly $\delta$-distinct colorings to having at most $K$ $(\delta + \epsilon)$-distinct colorings. 

One crucial approach we use in this section is grouping vertices by the tuple of colors they receive by various colorings and treating vertices in the same group as a single ``weighted'' vertex, connected by ``weighted'' edges to other weighted vertices. As such, it is convenient to suppress the size of the graph and measure vertex or edge subsets by the fraction of vertices/edges they contain. In particular, this makes it conceptually more intuitive when we disregard small parts of the graph. For vertex subsets $A, B$ of a graph $G = (V, E)$, we write $w_v(A) = |A|/|V|$, $w_e(A) = |E(A)|/|E|$, and $w_e(A, B) = |E(A, B)|/|E|$. 

Observe that \Cref{thm_strong_const} is about graphs with very small $\lambda_2$. A critical property of such graphs, as shown in \Cref{lem_indep_set} below, is that the union of any two sufficiently large vertex sets with few edges between them is a nearly independent set. Crucially, we do not even any assumptions on the number of edges within each vertex set to reach this conclusion.

\begin{lemma}\label{lem_indep_set}
	Let $G = (V, E)$ be a graph with second eigenvalue $\lambda_2$. For disjoint vertex subsets $A, B$ with $w_v(A), w_v(B) \geq \gamma$ and $w_e(A, B) \leq \xi$, we have
	\[
		w_e(A \cup B) \leq \frac{3 \max\{\lambda_2, \xi\}}{\gamma}.
	\]
\end{lemma}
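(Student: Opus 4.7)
The plan is to apply the variational characterization of $\lambda_2$ (\Cref{lem_courant}) to a carefully tilted test vector. The naive choice $\mathbf{1}_{A \cup B}$ gives only a bound of the shape $e(A\cup B) \leq (\alpha+\beta)^2 + \lambda_2(\alpha+\beta)$, which does not use the hypothesis $e(A,B) \leq \xi$ at all. The key idea is to pick a weighted combination of $\mathbf{1}_A$ and $\mathbf{1}_B$ that is automatically orthogonal to $\mathbf{1}$ and that produces $e(A,B)$ with a \emph{negative} coefficient in the resulting quadratic form, so that the hypothesis on $e(A,B)$ ends up on the useful side of the inequality.

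Concretely, writing $\alpha = w(A)$, $\beta = w(B)$ and assuming WLOG $\alpha \leq \beta$, I would use the test vector $f = \beta \mathbf{1}_A - \alpha \mathbf{1}_B$. Orthogonality to $\mathbf{1}$ is immediate since $|A| = \alpha n$ and $|B| = \beta n$, and expanding $f^T \tilde{A} f$ using the standard identities $\mathbf{1}_A^T \tilde{A} \mathbf{1}_A = 2|E(A)|/d$, $\mathbf{1}_B^T \tilde{A} \mathbf{1}_B = 2|E(B)|/d$, and $\mathbf{1}_A^T \tilde{A} \mathbf{1}_B = |E(A,B)|/d$ (valid because $A \cap B = \emptyset$), together with $\|f\|^2 = \alpha\beta(\alpha+\beta)n$, turns \Cref{lem_courant} into
\[
\beta^2 e(A) + \alpha^2 e(B) - \alpha\beta \, e(A,B) \leq \lambda_2 \, \alpha\beta(\alpha+\beta)
\]
after normalizing by $|E| = dn/2$. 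From here the argument is routine algebra: substitute $e(A,B) \leq \xi$, lower-bound the left-hand side by $\alpha^2(e(A) + e(B))$ using $\alpha \leq \beta$, and simplify with $\alpha \geq \gamma$, $\beta \leq 1$, and $\alpha+\beta \leq 1$. This yields $e(A) + e(B) \leq (\lambda_2 + \xi)/\gamma$, and adding the trivial bound $e(A,B) \leq \xi \leq \xi/\gamma$ then gives $e(A\cup B) \leq (\lambda_2 + 2\xi)/\gamma \leq 3\max\{\lambda_2, \xi\}/\gamma$.

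The main obstacle is really just spotting the right test vector; once that is in hand the rest is manipulation. Symmetric or unweighted combinations of $\mathbf{1}_A$ and $\mathbf{1}_B$ produce $e(A,B)$ with the wrong sign and cannot absorb the hypothesis, while the tilt $\beta \mathbf{1}_A - \alpha \mathbf{1}_B$ is essentially forced by requiring simultaneously orthogonality to $\mathbf{1}$ without an additive shift and a negative coefficient on the cross term $\mathbf{1}_A^T \tilde{A} \mathbf{1}_B$. The extremal configuration that controls the final constant is $\alpha \approx \gamma$, $\beta \approx 1 - \gamma$, and the constant $3$ in the conclusion arises from the mild slack $\lambda_2 + 2\xi \leq 3\max\{\lambda_2,\xi\}$ at the end.
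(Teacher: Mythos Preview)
Your proposal is correct and is essentially the same proof as the paper's: the paper uses exactly the test vector $x = w(B)\mathbf{1}_A - w(A)\mathbf{1}_B$ in \Cref{lem_courant}, obtains the same inequality $\beta^2 e(A) + \alpha^2 e(B) - \alpha\beta\, e(A,B) \leq \lambda_2\, \alpha\beta(\alpha+\beta)$, and finishes with the same arithmetic to reach $e(A)+e(B) \leq (\lambda_2+\xi)/\gamma$ and then $e(A\cup B) \leq 3\max\{\lambda_2,\xi\}/\gamma$. The only cosmetic difference is that the paper bounds $w(B)/w(A), w(A)/w(B) \geq \gamma$ symmetrically rather than invoking WLOG $\alpha \leq \beta$.
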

\begin{proof}
	Let $x \in \R^{V}$ be given by 
	\[
		x_v = \begin{cases}
			w_v(B) &\quad v \in A \\
			-w_v(A) &\quad v \in B \\
			0 &\quad \text{otherwise}.
		\end{cases}
	\]
	Then $x_v \perp \mathbf{1}$. By \Cref{lem_courant}, 
	\begin{align*}
		\lambda_2
        &\geq \frac{\frac{1}{d} \left( 2|E(A)| w_v(B)^2 + 2|E(B)| w_v(A)^2 - 2|E(A, B)| w_v(A)w_v(B)  \right)}{w_v(B)^2 |A| + w_v(A)^2 |B|}\\
		&= \frac{w_e(A) w_v(B)^2 + w_e(B) w_v(A)^2 - w_e(A, B) w_v(A)w_v(B)}{(w_v(A) + w_v(B)) w_v(A) w_v(B)} \\
        &\geq \frac{(w_e(A) + w_e(B)) \gamma - \xi}{(w_v(A) + w_v(B))} \\
        &\geq (w_e(A) + w_e(B)) \gamma - \xi.
	\end{align*}
    As $w_e(A \cup B) = w_e(A) + w_e(B) + w_e(A, B)$, we have
    \[
        w_e(A \cup B) \leq \frac{\lambda_2 + \xi}{\gamma} + \xi \leq \frac{3 \max\{\lambda_2, \xi\}}{\gamma}.
    \]
\end{proof}

Given a set of $\delta$-distinct colorings of $G$, we use \Cref{lem_indep_set} to partition most vertices of $V$ into ``nearly independent sets'' such that no coloring gives the same color to vertices of different nearly independent sets. This allows us to upper bound the distance between colorings on each of these nearly independent sets.

To construct these nearly independent sets, we group the vertices of $G$ using the colorings. Fix $\epsilon > 0$ and let $\delta = 1 - \frac{1}{2} \left( \frac{1}{\lfloor \frac{q}{2} \rfloor} + \frac{1}{\lceil \frac{q}{2} \rceil} \right) + \epsilon$. Suppose that $C = \{X_1, \ldots, X_K\}$ is a $\delta$-distinct set of colorings of $G$. For $\alpha = (a_1, \ldots, a_K) \in [q]^K$, let
\[
    S_{\alpha} = \{v \in V | X_i(v) = a_i \text{ for }i \in [K]\}
\]
be the set of vertices that are colored according to $\alpha$ by the colorings in $C$. Observe that each $S_{\alpha}$ is an independent set, and for any $\alpha, \beta \in [q]^K$ that agree on at least one coordinate, there are no edges between $S_{\alpha}$ and $S_{\beta}$. 

An equivalent way to view the above setup is a graph homomorphism from $\phi: G \to K_{q}^{\otimes K}$, the graph on $[q]^K$ where $(a_1, \ldots, a_K)$ and $(b_1, \ldots, b_K)$ are adjacent if and only if $a_i \neq b_i$ for each $i$. The homomorphism is given by $\phi(v) = (X_1(v), \ldots, X_K(v))$. Then the preimage of a vertex $\alpha$ of $K_{q}^{\otimes K}$ is precisely $S_{\alpha}$. We write $\alpha \sim \beta$ if they are adjacent in $K_{q}^{\otimes K}$; that is, if they don't agree on any coordinate. Thus, we can abstract $G$ as a weighted version of $K_{q}^{\otimes K}$, where for vertex subsets $A, B \subset [q]^K$, we have $w_v(A) = w_v(\phi^{-1}(A))$ and $w_e(A, B) = w_e(\phi^{-1}(A), \phi^{-1}(B))$. For convenience, when $A = \{\alpha\}, B= \{\beta\}$, we will simply write $w_v(\alpha)$ and $w_e(\alpha, \beta)$. 

We are now ready to construct the nearly independent sets from this abstraction of $G$. 
\begin{lemma}\label{lem_partition}
    Suppose $\gamma > 0$ and let $V_{\gamma} = \{\alpha \in [q]^{K} | w_v(\alpha) \geq \gamma\}$. Then there exists a partition of $V_{\gamma}$ into subsets $S_1, \ldots, S_t$ such that 
    \[
        w_e(S_i) \leq \left(\frac{3}{\gamma}\right)^{q^K} \lambda_2
    \]
    for all $i$. Furthermore, for all $\alpha \in S_i$, $\beta \in S_j$ with $i \neq j$, we have that $\alpha \sim \beta$. 
\end{lemma}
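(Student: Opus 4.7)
The plan is to take $S_1, \ldots, S_t$ to be the connected components of the auxiliary graph $H_\gamma$ already introduced before the lemma. The inter-component condition is immediate from the definition: if $\alpha \in S_i$ and $\beta \in S_j$ with $i \neq j$ were to satisfy $\alpha \nsim \beta$, then $\{\alpha, \beta\}$ would be an edge of $H_\gamma$ and $\alpha, \beta$ would lie in the same component, a contradiction. Hence $\alpha \sim \beta$ whenever $\alpha$ and $\beta$ lie in distinct parts, and by construction the $S_i$ partition $V_\gamma$.

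The heart of the argument is the edge bound $e(S_i) \leq (3/\gamma)^{q^K} \lambda_2$, which I would prove by induction along a spanning tree of each component. Fix $i$ and set $m = |S_i|$. If $m = 1$, then $e(S_i) = 0$, because any two vertices of $G$ in the same block $S_{v_1}$ receive the same color under every $X_\ell$ and so cannot be $G$-adjacent. If $m \geq 2$, order the elements of $S_i$ as $v_1, v_2, \ldots, v_m$ so that for each $j \geq 2$ the vertex $v_j$ has an $H_\gamma$-neighbor in $T_{j-1} := \{v_1, \ldots, v_{j-1}\}$; such an ordering exists because $S_i$ is connected in $H_\gamma$ (take, for instance, a BFS ordering rooted at $v_1$). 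Observe that $e(T_2) = 0$: the condition $v_1 \nsim v_2$ forces the blocks $S_{v_1}$ and $S_{v_2}$ to agree on some coordinate, eliminating all $G$-edges between them, and each block is internally edgeless by the previous remark.

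For the inductive step, passing from $T_j$ to $T_{j+1} = T_j \cup \{v_{j+1}\}$, I would apply the partition suggested in the preamble: $A := \{v_{j+1}\} \cup \{u \in T_j : u \sim v_{j+1}\}$ and $B := \{u \in T_j : u \nsim v_{j+1}\}$. The spanning-tree choice guarantees $B \neq \emptyset$, and since both $A$ and $B$ sit inside $V_\gamma$ we have $w(A), w(B) \geq \gamma$. Edges between $A$ and $B$ can only come from $(A \cap T_j) \times B \subseteq T_j \times T_j$, since $v_{j+1}$ shares a coordinate with every vertex of $B$ and so has no $G$-edges to $B$; hence $e(A, B) \leq e(T_j)$. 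Lemma \ref{lem_indep_set} then yields $e(T_{j+1}) \leq 3 \max\{\lambda_2, e(T_j)\}/\gamma$. Iterating from $e(T_2) = 0$ gives $e(S_i) = e(T_m) \leq (3/\gamma)^{m-2} \lambda_2 \leq (3/\gamma)^{q^K} \lambda_2$, as required.

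The main obstacle is essentially bookkeeping: at every inductive step I must verify that the set $B$ has weight at least $\gamma$, so that the hypotheses of Lemma \ref{lem_indep_set} are in fact met. This is the single place where the connectedness of $S_i$ in $H_\gamma$ is really used, and it is precisely what the spanning-tree ordering was designed to secure. All remaining estimates are a direct application of the framework set up immediately before the lemma statement.
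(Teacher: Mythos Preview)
Your proof is correct and follows essentially the same route as the paper: take the connected components of $H_\gamma$, and within each component add vertices one at a time along a spanning-tree ordering, invoking Lemma~\ref{lem_indep_set} at each step with exactly the same $A$/$B$ split. Your presentation is in fact slightly tidier---you make the BFS ordering explicit and observe directly that $e(T_2)=0$, yielding the marginally sharper exponent $m-2$ in place of the paper's $|S|$---but the argument is the same.
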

\begin{proof}
    We build the $S_i$ sequentially. When we start $S_{i}$, first add an arbitrary remaining element of $V_{\gamma}$ to it. Next, whenever there is some remaining element $\beta \in V_{\gamma}$ such that $\beta \nsim \alpha'$ for some $\alpha' \in S_i$, we add $\beta$ to $S_i$. This way, when we are done with $S_i$, all remaining elements of $V_{\gamma}$ are adjacent to all elements of $S_i$. Thus, this process produces a partition such that, for all $\alpha \in S_i$, $\beta \in S_j$ with $i \neq j$, we have $\alpha \sim \beta$. 

    We now prove that each $S_i$ is nearly independent. Let $S = S_i$ be the current subset we are building in the partition process. We will prove that at any stage, 
    \[
        w_e(S) \leq \left(\frac{3}{\gamma}\right)^{|S|} \lambda_2,
    \]
    which implies the lemma. Initially, when there is only a single element in $S$, we have $w_e(S) = 0$ which clearly satisfies the condition. Now suppose we make the update $S' = S \cup \{\beta\}$ in the process. Let
    \[
        A = \{\beta\} \cup \{\alpha \in S | \alpha \sim \beta\} 
        \quad \text{and} \quad
        B = \{\alpha \in S | \alpha \nsim \beta \}
    \]
    be a partition of $S'$. By construction, both $A$ and $B$ are nonempty and thus have weight at least $\gamma$. Also, since there are no edges between $\beta$ and $B$, we have that $w_e(A, B) \leq w_e(S)$. Thus, by \Cref{lem_indep_set} and the induction hypothesis,
    \[
        w_e(S') \leq \frac{3 \max \{\lambda_2, w_e(S)\}}{\gamma} \leq \left(\frac{3}{\gamma}\right)^{|S| + 1} \lambda_2,
    \]
    as desired.
\end{proof}

One simple constraint on these nearly independent sets is that they cannot get much larger than half of $|V|$.
\begin{lemma}\label{lem_indep_size}
    Let $G = (V, E)$ be a graph and let $A \subset V$. For any $\epsilon > 0$, if $w_e(A) \leq \epsilon$, then $w_v(A) \leq \frac{1 + \epsilon}{2}$. 
\end{lemma}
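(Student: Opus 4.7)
The plan is to prove the contrapositive: if $w(A)$ exceeds $(1+\epsilon)/2$, then $A$ must contain many internal edges. The key observation is that every edge of $G$ that is not inside $A$ must have at least one endpoint in $V \setminus A$, and in a $d$-regular graph the number of such edges is constrained by the degree sum of $V \setminus A$.

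First I would set up the count. Let $G$ be $d$-regular on $n$ vertices, so $|E| = dn/2$. Every edge in $E \setminus E(A)$ (i.e., every edge of $E(V\setminus A)$ or $E(A, V\setminus A)$) has at least one endpoint in $V \setminus A$, so double-counting endpoints gives
\[
    |E| - |E(A)| \;\leq\; \sum_{v \in V \setminus A} \deg(v) \;=\; d\,|V \setminus A| \;=\; d(1 - w(A))\,n.
\]
Dividing both sides by $|E| = dn/2$ yields $1 - e(A) \leq 2(1 - w(A))$, i.e.,
\[
    e(A) \;\geq\; 2w(A) - 1.
\]
From the assumption $e(A) \leq \epsilon$ one immediately obtains $w(A) \leq (1+\epsilon)/2$, as desired.

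The proof is entirely elementary and there is no real obstacle; the only subtlety worth flagging is that the argument uses $d$-regularity to equate the degree sum with $d|V\setminus A|$, which is consistent with the standing convention in the paper (established at the start of Section~1.1) that all graphs under consideration are regular. No appeal to spectral information or to any of the earlier lemmas is required.
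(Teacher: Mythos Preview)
Your proof is correct and essentially the same as the paper's: both are one-line degree-counting arguments in a regular graph yielding $2w(A) \leq 1 + e(A)$. The only cosmetic difference is that you sum degrees over $V\setminus A$ while the paper sums over $A$ (using $2w(A) = 2e(A) + e(A,V\setminus A) \leq 1 + e(A)$).
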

\begin{proof}
    Since $G$ is a regular graph,
    \[
        2 w_v(A) = 2 w_e(A) + w_e(A, V \setminus A) \leq 1 + w_e(A) \leq 1 + \epsilon. 
    \]
\end{proof}

We are now in a position to prove \Cref{thm_strong_const}. We first take the partition of $V_{\gamma}$ into $S_1, \ldots, S_t$ provided by \Cref{lem_partition}. Using a pigeonhole argument, we show that if $G$ has too many $\delta$-distinct colorings, then we can find many colorings such that for each $S_i$, these colorings use the same set of colors. Then we deduce that some pair of these colorings cannot be $\delta$-distinct to obtain a contradiction. For the last part of the proof, we use the following simple technical lemma. 
\begin{lemma}\label{lem_diagonalize}
    For any $\epsilon > 0$ and any integer $L \geq 1 + 1/\epsilon$, the following holds. Let $w: [q]^L \to \R_{\geq 0}$ be any weight function with $w([q]^L) = 1$ and let $T_1, \ldots, T_t$ be a partition of $[q]$. Then, there exist distinct $a, b \in [L]$ such that 
    \[
        \sum_{\alpha: \alpha_a = \alpha_b} w(\alpha) \geq \sum_{i = 1}^{t} \frac{w(T_i^{L})}{|T_i|} - \epsilon.
    \]
\end{lemma}
Essentially, this lemma says that for a sufficiently large $L$, there is a way to project $[q]^L$ onto $[q]^2$ such that the preimage of the diagonal almost has the expected amount of weight if $w$ was uniform on each of the hypercubes $T_i^{L}$. 
\begin{proof}
    Let $a, b \in [L]$ to be uniformly random pair of distinct elements. Without loss of generality, suppose $T_1 = [T_1] \subset [q]$. Let $\alpha \in T_1^{L}$ and suppose $\alpha$ has $L_1, \ldots, L_{|T_1|}$ coordinates equal to $1, \ldots, T_1$, respectively. Then
    \begin{align*}
        \Pr[\alpha_a = \alpha_b] 
        &= \frac{\sum_{i = 1}^{|T_1|} \binom{L_i}{2}}{\binom{L}{2}}\\
        &\geq \frac{\frac{L^2}{|T_1|} - L}{L^2 - L}\\
        &\geq \frac{1}{|T_1|} - \frac{1}{L - 1}\\
        & \geq \frac{1}{|T_1|} - \epsilon. 
    \end{align*}
    Thus, summing over all $\alpha \in T_i^{L}$ for some $i$, we have
    \begin{align*}
        \E \left[\sum_{\alpha: \alpha_a = \alpha_b} w(\alpha) \right]
        &\geq \sum_{i = 1}^{t} \sum_{\alpha \in T_i^{L}} w(\alpha) \Pr[\alpha_a = \alpha_b]\\
        &\geq \sum_{i = 1}^{t} \frac{w(T_i^{L})}{|T_i|} - \epsilon \sum_{i = 1}^{t} w(T_i^{L})\\
        &\geq \sum_{i = 1}^{t} \frac{w(T_i^{L})}{|T_i|} - \epsilon.
    \end{align*}
    Thus, some choice of $a, b$ produces the desired result. 
\end{proof}

We now prove \Cref{thm_strong_const}.
\begin{proof}[Proof of \Cref{thm_strong_const}]
    We claim that 
    \[
    K(\epsilon) \leq \left\lceil\frac{4q}{\epsilon} \right\rceil \cdot \max_{t} t! S(q, t),
    \]
    where $S(q, t)$ is the Stirling number of the second kind, which counts the number of ways to partition $[q]$ into $t$ nonempty subsets. Let $K = 1 + \lceil\frac{4q}{\epsilon}\rceil \cdot \max_{t} t! S(q, t)$. Let $\gamma = \epsilon/(2q^{K})$. Let $V_{\gamma} = \bigcup_{i = 1}^{t} S_i$ be the partition given by \Cref{lem_partition}. Then $w_v(V_{\gamma}) \geq 1 - q^K \gamma = 1 - \epsilon/2$. By \Cref{lem_indep_size}, we have that $t \geq 2$. Let $\lambda = \lambda(\epsilon)$ be such that 
    \[
        w_e(S_i) \leq \left( \frac{3}{\gamma} \right)^{q^K} \lambda_2 \leq \epsilon
    \]
    for any $\lambda_2 \leq \lambda$. 
    
    Let $S_{ij} \subset [q]$ be the projection of $S_i$ onto the $j$-th coordinate for $i \in [t]$, $j \in [K]$. Since no elements of different parts share a coordinate, for each $j$, the sets $S_{1j}, \ldots, S_{tj}$ are disjoint. We add the remaining elements of $[q]$ to $S_{1j}$ such that $S_{1j}, \ldots, S_{tj}$ partition $[q]$ for each $j$. Since there are at most $t! S(q, t)$ distinct such partitions, the pigeonhole principle implies that there exist a set of at least $L \geq 1 + \frac{4q}{\epsilon}$ colorings which have the exact same corresponding partition. Without loss of generality, suppose that $[L] \subset [K]$ are $L$ such colorings. 

    We take the corresponding projection $\Pi: [q]^K \to [q]^L$ and write $w_v(S) = w_v(\Pi^{-1}(S))$ for any $S \subset [q]^L$. Let $T_1, \ldots, T_t$ be the common partition of $[q]$ by each of the $L$ colorings. Then, by \Cref{lem_diagonalize}, there exist $a, b \in [L]$ such that 
    \[
        \sum_{\alpha: \alpha_a = \alpha_b} w_v(\alpha) \geq \sum_{i = 1}^{t} \frac{w_v(T_i^{L})}{|T_i|} - \frac{\epsilon}{4q}.
    \]
    We now claim that $d_H(X_a, X_b)/n < \delta$. Indeed, we have that 
    \[
        \frac{d_H(X_a, X_b)}{n} = 1 - \sum_{\alpha: \alpha_a = \alpha_b} w_v(\alpha) \leq 1 - \sum_{i = 1}^{t} \frac{w_v(T_i^{L})}{|T_i|} + \frac{\epsilon}{4q}.
    \]
    Recall that $2 \leq t \leq q$, $\sum |T_i| = q$, $\sum w_v(T_i^L) = w_v(V_{\gamma}) \geq 1 - \epsilon/2$, and $w_v(T_i^L) \leq \frac{1 + \epsilon}{2}$ for each $i$ by \Cref{lem_indep_size}. 

    We wish to lower bound the quantity $\sum \frac{w_v(T_i^{L})}{|T_i|}$. Thus, we may assume that $\sum w_v(T_i^L) = 1 - \epsilon/2$. Suppose that $|T_1| \geq \cdots \geq |T_t|$. Then the quantity is minimized by maximizing $w_v(T_1^{L})$ and then $w_v(T_2^{L})$. Therefore, we have
    \[
        \sum_{i = 1}^{t} \frac{w_v(T_i^{L})}{|T_i|} 
        \geq \frac{\frac{1 + \epsilon}{2}}{|T_1|} + \frac{\frac{1 - 2\epsilon}{2}}{|T_2|} 
        \geq \frac{\frac{1 + \epsilon}{2}}{\lceil \frac{q}{2} \rceil} + \frac{\frac{1 - 2\epsilon}{2}}{\lfloor \frac{q}{2} \rfloor}
    \]
    for sufficiently small $\epsilon$. Finally, we have 
    \begin{align*}
        \frac{d_H(X_1, X_2)}{n} 
        &\leq 1 - \left( \frac{\frac{1 + \epsilon}{2}}{\lceil \frac{q}{2} \rceil} + \frac{\frac{1 - 2 \epsilon}{2}}{\lfloor \frac{q}{2} \rfloor} \right) + \frac{\epsilon}{4q}\\
        &\leq 1 - \frac{1}{2} \left(\frac{1}{\lfloor \frac{q}{2} \rfloor} + \frac{1}{\lceil \frac{q}{2} \rceil}\right)  + \epsilon \left(\frac{1}{\lfloor \frac{q}{2} \rfloor} - \frac{1}{2} \frac{1}{\lceil \frac{q}{2} \rceil} + \frac{1}{4q} \right)\\
        &< 1 - \frac{1}{2} \left(\frac{1}{\lfloor \frac{q}{2} \rfloor} + \frac{1}{\lceil \frac{q}{2} \rceil}\right)  + \epsilon \\
        &= \delta.
    \end{align*}
\end{proof}
An interesting phenomenon is that \Cref{thm_strong_const} holds for $g_{\delta, \lambda, d}(n)$ with a constant $K = K(q)$ that is independent of $\epsilon$. We leave as an open problem to prove \Cref{thm_strong_const} with a constant $K$ independent of $\epsilon$ or to demonstrate a sequence of points in the constant regime for $f$ where $f$ takes arbitrarily large constant value. 
\begin{corollary}\label{cor_const}
    There exists an integer $K = K(q)$ such that the following holds. For any $\epsilon > 0$, there exists some $\lambda = \lambda(\epsilon) > 0$ such that for $\delta = 1 - \frac{1}{2} \left( \frac{1}{\lfloor \frac{q}{2} \rfloor} + \frac{1}{\lceil \frac{q}{2} \rceil} \right) + \epsilon$, we have
	\[
		\sup_{d \geq 0} \limsup_{n \to \infty} g_{\delta, \lambda, d}(n) \leq K.
	\]
    In particular, $K(3) = 2$. 
\end{corollary}
\begin{proof}
    We claim that $K(q) \leq \binom{q - 1}{\lfloor \frac{q - 1}{2} \rfloor}$, so that in particular $K(3) \leq 2$. The gain here comes from the fact that we get to arbitrarily permute the colors when trying to upper bound $d_c$. Thus, it suffices to find a pair of colorings $a, b \in [K]$ such that $|S_{1a}| = |S_{1b}|, \ldots, |S_{ta}| = |S_{tb}|$. This only requires $K \geq \binom{q - 1}{t - 1}$, which produces our bound. 

    After we find this pair, we also bypass \Cref{lem_diagonalize} altogether, since we can permute the colors so that at least the expected amount of weight lie on the diagonal, getting rid of the $\frac{\epsilon}{4q}$ error term. The rest of the computation follows exactly. 
\end{proof}

\subsection{Proof of \Cref{thm_const}~(2)}\label{subsec_4.2}
We now turn to the case when $\delta = 1 - \frac{1}{q}$. We first give the construction for the negative result in \Cref{thm_const}~(2). 
\begin{lemma}\label{lem_non_const}
    The point $\left(1- \frac{1}{q}, \frac{1}{(q - 1)^2} \right)$ is not in the constant regime for $g$.  
\end{lemma}
To prove \Cref{lem_non_const}, we construct a family of graphs $G = G_n$ with $\lambda_2 = \frac{1}{(q - 1)^2}$ such that $G$ has a strongly $\left( 1- \frac{1}{q} \right)$-distinct set of colorings of arbitrary size. We obtain the sequence $G_n$ by constructing repeated \emph{$2$-lifts} of $G_1$. Recall that a $2$-lift of a graph is defined in the following way.

Let $G = (V, E)$ be an $n$-vertex graph. A \emph{signing} of the edges of $G$ is a function $s: E \to \{-1, 1\}$. The \emph{$2$-lift} of $G$ associated with signing $s$ is a graph $\widehat{G} = (\widehat{V}, \widehat{E})$ on $2n$ vertices given as follows. For each $v \in V$, we have two vertices $v_1, v_2 \in \widehat{V}$ associated to $v$. For each $uv \in E$, we have two edges in $\widehat{E}$ associated to it. If $s(uv) = 1$, then the two edges are $u_1 v_1$ and $u_2 v_2$. If $s(uv) = -1$, then the two edges are $u_1 v_2$ and $u_2 v_1$.

Observe that if $G$ is $d$-regular, then any $2$-lift of $G$ is also $d$-regular. The following result of Bilu and Linial \cite{bilu2006} shows that we can always find a $2$-lift that preserves the second eigenvalue of $G$. 
\begin{lemma}\cite{bilu2006}\label{lem_2_lift}
    There exists a constant $C > 0$ such that the following holds. Let $G$ be a $d$-regular graph with normalized second eigenvalue $\lambda_2$. There exists a $2$-lift of $G$ whose normalized second eigenvalue is
    \[  
        \max \left\{\lambda_2, C\left(\sqrt{\frac{\log^3 d}{d}} \right)\right\}.
    \]
\end{lemma}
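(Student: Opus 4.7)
My plan is to follow the original Bilu--Linial argument, which consists of two essentially independent pieces: a spectral decomposition of 2-lifts, and a probabilistic construction of a ``good'' signing via a converse to the expander mixing lemma.

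First I would establish the spectral decomposition. For any signing $s : E \to \{-1, 1\}$, the unnormalized spectrum of $\widehat{G}$ equals the disjoint union (with multiplicity) of the spectrum of $A$, the ordinary adjacency matrix of $G$, and the spectrum of the \emph{signed adjacency matrix} $A_s$, defined by $(A_s)_{uv} = s(uv)$ if $uv \in E$ and $0$ otherwise. The proof is a direct block diagonalization: writing each $x \in \R^{\widehat{V}}$ as its symmetric part (equal coordinates on $v_1, v_2$) plus its antisymmetric part (opposite coordinates on $v_1, v_2$), the adjacency matrix of $\widehat{G}$ preserves this decomposition and restricts to $A$ on the symmetric subspace and to $A_s$ on the antisymmetric subspace. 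The trivial eigenvalue $d$ of $\widehat{G}$ already appears in the spectrum of $A$, so the normalized $\lambda_2(\widehat{G})$ equals $\max\{\lambda_2(G),\,\|A_s\|/d\}$, where $\|A_s\|$ is the spectral radius. It therefore suffices to exhibit a signing $s$ with $\|A_s\| \leq C \sqrt{d \log^3 d}$.

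Next I would exhibit such a signing probabilistically. Draw $s$ uniformly from $\{-1, 1\}^E$. For any pair of vertex subsets $S, T \subseteq V$, the quantity $e_s(S, T) := \mathbf{1}_S^\top A_s \mathbf{1}_T$ is a signed sum of at most $|E(S, T)| \leq d \sqrt{|S||T|}$ independent Rademacher variables, so by Hoeffding,
\[
    \Pr\bigl[|e_s(S, T)| \geq t\bigr] \leq 2 \exp\!\left( -\frac{t^2}{2 |E(S, T)|} \right).
\]
Choosing $t = \Theta(\sqrt{d \log d \cdot |S||T|})$ and union-bounding over the at most $4^n$ pairs $(S, T)$ (with a dyadic refinement to handle small sets cleanly) shows that with positive probability, $s$ satisfies the discrepancy bound $|e_s(S, T)| \leq C' \sqrt{d \log d} \cdot \sqrt{|S||T|}$ for every pair $(S, T)$ simultaneously.

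Finally, I would invoke the Bilu--Linial converse expander mixing lemma: any symmetric $n \times n$ matrix $B$ with zero diagonal and bounded entries whose discrepancy satisfies $|\mathbf{1}_S^\top B \mathbf{1}_T| \leq \alpha \sqrt{|S||T|}$ for all disjoint $S, T \subseteq [n]$ has spectral radius $\|B\| = O(\alpha \log(n/\alpha))$. The proof of this converse decomposes a putative top eigenvector into its $\pm 1$ level sets and applies the discrepancy bound at each scale. Applied to $A_s$ with $\alpha = C'\sqrt{d \log d}$, one obtains $\|A_s\| = O(\sqrt{d \log^3 d})$; normalizing by $d$ and taking the max with $\lambda_2(G)$ yields the lemma. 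The main obstacle is this converse inequality: the forward direction is the elementary Expander Mixing Lemma, but the converse requires a delicate level-set decomposition whose iteration is precisely what produces the $\log^3 d$ factor in the final bound. Steps 1 and 2 are routine once this key inequality is in hand.
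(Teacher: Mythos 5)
This lemma is cited from Bilu and Linial \cite{bilu2006}; the paper does not reproduce a proof, so there is no in-paper argument to compare against. Your sketch correctly identifies the three structural components of the Bilu--Linial argument: the spectral splitting of the lift into the symmetric part (spectrum of $A$) and the antisymmetric part (spectrum of the signed matrix $A_s$), the probabilistic existence of a low-discrepancy signing, and the converse to the expander mixing lemma that converts a discrepancy bound into a spectral-norm bound. The statement and proof of the spectral splitting are accurate.

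However, step 2 as you wrote it has a genuine gap: a \emph{uniform} discrepancy target $|e_s(S,T)| \leq C'\sqrt{d\log d}\cdot\sqrt{|S||T|}$ does not survive a union bound over all $4^n$ pairs. With $t = \Theta(\sqrt{d\log d\cdot |S||T|})$ and $|E(S,T)| \leq d\sqrt{|S||T|}$, Hoeffding gives a per-pair failure probability of $\exp(-\Theta(\log d\cdot\sqrt{|S||T|}))$; for $|S|=|T|=s$ there are roughly $(en/s)^{2s}$ such pairs, so you would need $s\log d \gtrsim s\log(n/s)$, which fails whenever $s \ll n/\mathrm{poly}(d)$ --- in particular for singletons. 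The phrase ``dyadic refinement to handle small sets cleanly'' gestures at this but does not resolve it. What Bilu and Linial actually prove is a \emph{size-dependent} discrepancy bound in which the target scales with an extra $\log(n/|S|)$-type factor (exactly what makes the union bound close), and correspondingly their converse mixing lemma is formulated to accept a non-uniform hypothesis of this shape; iterating the level-set decomposition under that weaker hypothesis is where the exponent in $\log^3 d$ comes from. As written, your argument cheats this step by invoking a clean uniform bound that is not attainable, and then invoking a version of the converse mixing lemma that would not be strong enough anyway. The fix is to state and carry the size-dependent discrepancy bound through both steps.
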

Now we prove \Cref{lem_non_const}.
\begin{proof}[Proof of \Cref{lem_non_const}]
    Let $N \geq 2$ be any sufficiently large positive integer. Let $G_1 = K_q^{\otimes N}$ be the graph on $[q]^N$ where $(a_1, \ldots, a_N)$ and $(b_1, \ldots, b_N)$ are adjacent if and only if $a_i \neq b_i$ for all $i \in [N]$. Then $G_1$ is $(q - 1)^N$-regular. Furthermore, the normalized adjacency matrix of $G_1$ is
    \[
        A = A_{K_q}^{\otimes N},
    \]
    where $A_{K_q}$ is the normalized adjacency matrix of $K_q$ and $\otimes$ is the Kronecker product. Since the spectrum of $K_q$ is given by $\lambda_1 = 1$ and $\lambda_2 = \cdots = \lambda_q = -\frac{1}{q - 1}$, we have that $\lambda_2(G_1) = \frac{1}{(q - 1)^2}$. By \Cref{lem_2_lift}, for $N$ sufficiently large, we can find a sequence of repeated $2$-lifts of $G_1$ to form a sequence $G = G_n$ of $(q - 1)^N$-regular graphs with $\lambda_2 = \frac{1}{(q - 1)^2}$.

    We show that each $G_n$ in this sequence has a strongly $\left(1 - \frac{1}{q} \right)$-distinct set of colorings of size $N$. First, consider the colorings $X_1, \ldots, X_N$ of $G_1$ where $X_i(a_1, \ldots a_N) = a_i$. It is easy to verify that $d_c(X_i, X_j) = \left(1 - \frac{1}{q} \right) q^K$. 
    
    Now it suffices to show that a strongly $\delta$-distinct set of colorings of a graph can be lifted to a strongly $\delta$-distinct set of colorings of its $2$-lift. Indeed, if $X_1, \ldots, X_N$ are colorings of $G = (V, E)$, then consider colorings $\widehat{X_1}, \ldots, \widehat{X_N}$ where $\widehat{X_i}(v_1) = \widehat{X_i}(v_2) = X_i(v)$ for all $v \in V$. It is clear that the coloring distance between any pair of colorings is doubled, so the parameter $\delta$ is preserved. 

    Therefore, there exists a sequence $G = G_n$ of $(q - 1)^N$-regular graphs with $\lambda_2 = \frac{1}{(q - 1)^2}$ such that each $G_n$ has a strongly $\left(1 - \frac{1}{q} \right)$-distinct set of colorings of size $N$. Taking $N$ to infinity, we have that $\left(1 - \frac{1}{q}, \frac{1}{(q - 1)^2}\right)$ is not in the constant regime. 
\end{proof}

Now it remains to show that, by letting $\lambda$ be a little smaller than $1/(q - 1)^2$, we return to the constant regime for both $f$ and $g$. We make use of the following general lemma. 
\begin{lemma}\label{lem_test_vec}
    Let $G = (V, E)$ be a regular graph with $\lambda_2 \leq \lambda$ and let $S \subset V$ be a vertex subset. Then 
    \[
        w_e(S, \overline{S}) \geq 2(1 - \lambda)w_v(S)(1 - w_v(S)).
    \]
\end{lemma}
\begin{proof}
    Consider the test vector $x \in \R^{V}$ given by 
    \[
        x_v = \begin{cases}
            w_v(S) - 1 \quad & v \in S\\
            w_v(S) \quad & v \in \overline{S}.
        \end{cases}
    \]
    Then $x \perp \mathbf{1}$. Recall that for any $A \subset V$, $w_e(A) + \frac{1}{2}w_e(A, \overline{A}) = w_v(A)$. By \Cref{lem_courant}, we have 
    \begin{align*}
        \lambda 
        \geq \lambda_2
        &\geq \frac{\frac{1}{d}(2|E(S)|(w_v(S) - 1)^2 + 2|E(\overline{S})|w_v(S)^2 + 2|E(S, \overline{S})|(w_v(S) - 1)w_v(S) )}{(w_v(S) - 1)^2 |S| + w_v(S)^2 |\overline{S}|}\\
        &= \frac{w_e(S)(1 - w_v(S))^2 + w_e(\overline{S})w_v(S)^2 - w_e(S, \overline{S}) (1 - w_v(S))w_v(S)}{w_v(S)(1 - w_v(S))}\\
        &= \frac{w_v(S) (1 - w_v(S))^2 + (1 - w_v(S))w_v(S)^2 - \frac{1}{2}w_e(S, \overline{S})}{w_v(S)(1 - w_v(S))}\\
        &= 1 - \frac{w_e(S, \overline{S})}{2w_v(S)(1 - w_v(S))}.
    \end{align*}
    Rearranging gives the desired inequality. 
\end{proof}
We now prove the remaining part of \Cref{thm_const}. 
\begin{theorem}\label{thm_const_part_2}
    For any $\epsilon > 0$, $\left(1 - \frac{1}{q}, \frac{1}{(q - 1)^2} - \epsilon \right)$ is in the constant regime for $f$.
\end{theorem}
\begin{proof}
    Let $K = \lceil 1 + 2q^2/\epsilon \rceil$. Let $\delta = 1 - \frac{1}{q}$ and $\lambda = \frac{1}{(q - 1)^2} - \epsilon$. We will prove that $f_{\delta, \lambda, d}(n) < K$. Suppose for the sake of contradiction that $G$ is a $d$-regular $n$ vertex graph with $\lambda_2 \leq \lambda$ with a set $\{X_1, \ldots, X_K\}$ of $K$ $\delta$-distinct colorings. Recall the identification of $G$ with a weighted version of $K_q^{\otimes K}$ given in \Cref{subsec_4.1}. 

    For any two colorings $X_a, X_b$, observe that 
    \[
        1 - \frac{d_H(X_a, X_b)}{n} = w_v(S_{ab}) \leq \frac{1}{q},
    \]
    where $S_{ab} = \{\alpha \in [q]^K | \alpha_a = \alpha_b\}$. We claim that there exist $a, b \in [K]$ such that 
    \[
        w_e(S_{ab}) \geq \frac{1}{q(q-1)} - \frac{1}{K - 1}.
    \]
    Similar to the proof of \Cref{lem_diagonalize}, we choose $a, b$ to be a uniformly random pair of distinct numbers. Let $\alpha, \beta \in [q]^K$ and $\alpha \sim \beta$ (each coordinate of $\alpha$ differs from $\beta$). Let $C_{ij}$ be the number of colorings that color $\alpha$ by $i$ and $\beta$ by $j$. Note that $C_{ii} = 0$ for all $i$. Then
    \begin{align*}
        \Pr[\alpha, \beta \in S_{ab}] 
        &= \frac{\sum_{i \neq j} \binom{C_{ij}}{2}}{\binom{K}{2}} \\
        &\geq \frac{\frac{K^2}{q(q-1)} - K}{K^2 - K}\\ 
        &\geq \frac{1}{q(q-1)} - \frac{1}{K - 1}.
    \end{align*}
    Thus, for any $\alpha, \beta$ such that there could be edges between them, the probability that they are included in $w_e(S_{ab})$ is at least $\frac{1}{q(q-1)} - \frac{1}{K - 1}$, which implies that the expectation of $w_e(S_{ab})$ is at least this. This proves our claim. In particular, this means that $w_e(S_{ab}) \geq \frac{1}{q - 1} w_v(S_{ab}) - \frac{1}{K - 1}$. 

    Since $w_e(S_{ab}, \overline{S_{ab}}) = 2(w_v(S_{ab}) - w_e(S_{ab}))$, we have that 
    \[
        w_e(S_{ab}, \overline{S_{ab}}) \leq 2 \left(1 - \frac{1}{q - 1}\right) w_v(S_{ab}) + \frac{2}{K - 1}.
    \]
    On the other hand, \Cref{lem_test_vec} gives
    \[
        w_e(S_{ab}, \overline{S_{ab}}) 
        \geq 2(1 - \lambda)w_v(S_{ab})(1 - w_v(S_{ab}))
        \geq 2(1 - \lambda)w_v(S_{ab})\left(1 - \frac{1}{q}\right).
    \]
    Combining the two inequalities, we have
    \begin{align*}
        \frac{\epsilon}{2q^2} 
        &\geq \frac{1}{K - 1}\\
        &\geq \left(1 - \frac{1}{(q-1)^2} + \epsilon \right)
        \left(1 - \frac{1}{q}\right) w_v(S_{ab})
        - \left(1 - \frac{1}{q - 1}\right) w_v(S_{ab})\\
        & = \epsilon  \left(1 - \frac{1}{q}\right) w_v(S_{ab})\\
        &\geq \epsilon  \left(1 - \frac{1}{q}\right) \left( \frac{1}{q(q-1)} - \frac{1}{K - 1} \right)\\
        & \geq \frac{\epsilon}{q^2} - \frac{\epsilon}{K - 1}\\
        & \geq \frac{\epsilon}{q^2} - \frac{2 \epsilon^2}{q^2},
    \end{align*}
    which is false for all $\epsilon < 1/4$. 
\end{proof}

\section{The unique regime}\label{sec_unique_regime}
We prove \Cref{thm_unique} in this section. We first show the inclusion results for the unique regime. This extends Lemma~5.7 of \cite{bafna2024} which focuses on the case $q = 3$. 
\begin{theorem}\label{thm_strong_unique}
    Any $(\delta, \lambda) \in [1 - 1/(q - 1), 1 - 1/q] \times (0, 1]$ satisfying 
    \[
        (q-1)(1-\delta)^2 + (1 - (q-1)(1 - \delta))^2 < 1 - \frac{1 - \frac{1}{q - 1}}{1 - \lambda}
    \]
    is in the unique regime. In particular, for any $\epsilon > 0$, $\left(1 - \frac{1}{q}, \frac{1}{(q-1)^2} - \epsilon \right)$ is in the unique regime, and there exists some $\lambda = \lambda(\epsilon) > 0$ such that $\left( 1 - \frac{1}{q - 1} + \epsilon, \lambda \right)$ is in the unique regime. 
\end{theorem}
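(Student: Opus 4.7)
The plan is to argue by contradiction. Suppose $G$ is a $d$-regular graph with $\lambda_2(G) \leq \lambda$ which admits two proper $q$-colorings $X_1, X_2$ satisfying $d(X_1, X_2) \geq \delta n$. Denote by $w_{ij}$ the fraction of vertices with $X_1(v) = i$ and $X_2(v) = j$; let $a_i = \sum_j w_{ij}$ and $b_j = \sum_i w_{ij}$ be the color class fractions of the two colorings; and let $D = \sum_i w_{ii}$. By permuting the colors of $X_2$ if necessary, we may assume the identity permutation achieves the maximum agreement, so that $D = \max_{\sigma \in S_q} \sum_i w_{i, \sigma(i)} \leq 1 - \delta$.

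The central step is to construct a vector $z \perp \mathbf{1}$ in $\mathbb{R}^V$ whose Rayleigh quotient $\langle z, A z \rangle / \|z\|^2$ (with $A$ the normalized adjacency) exceeds $\lambda$, contradicting \Cref{lem_courant}. The natural family to try is $z = \sum_{i=1}^{q} c_i (\mathbf{1}_{V_i^{(1)}} - a_i \mathbf{1})$ for coefficients $c_i \in \mathbb{R}$, where $V_i^{(1)} = \{v : X_1(v) = i\}$. Each summand is automatically orthogonal to $\mathbf{1}$, and since each $V_i^{(1)}$ is an independent set, a direct computation gives $\langle \mathbf{1}_{V_i^{(1)}} - a_i \mathbf{1},\, A(\mathbf{1}_{V_i^{(1)}} - a_i \mathbf{1}) \rangle = -n a_i^2$ and $\| \mathbf{1}_{V_i^{(1)}} - a_i \mathbf{1} \|^2 = n a_i (1 - a_i)$, while the off-diagonal terms involve the edge densities $e_{ij}^{(1)} = e(V_i^{(1)}, V_j^{(1)})$ which satisfy $\sum_{j \neq i} e_{ij}^{(1)} = 2 a_i$ (since each $V_i^{(1)}$ is independent). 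The Rayleigh-quotient bound then reduces to a quadratic inequality in $c$.

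The second step is to choose $c$ so that the resulting inequality is driven by the ``extremal'' distribution $p = (1-(q-1)(1-\delta), 1-\delta, \ldots, 1-\delta)$, whose $\ell^2$-norm squared is precisely the left-hand side of the hypothesis. Using the distance constraint $\max_\sigma \sum_i w_{i, \sigma(i)} \leq 1 - \delta$ together with $\sum_i a_i = 1$, one should show via a linear-programming exchange argument that, up to reindexing, one of the $a_i$ is at least $1 - (q-1)(1-\delta)$ and the rest are at most $1 - \delta$; the same holds for $b$. Substituting a suitable $c$ into the quadratic bound and simplifying should yield an inequality equivalent to $\|p\|_2^2 \geq 1 - (1 - 1/(q-1))/(1 - \lambda)$, the reverse of the hypothesis, and hence the desired contradiction.

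The hardest part will be linking the distance constraint on the matrix $W = (w_{ij})$ to the marginal distribution $a$ (and $b$): the hypothesis bounds $D$ and more general maxima over $\sigma$, but extracting a clean extremal statement on $a$ alone requires care, likely via LP duality and a symmetrization between the two colorings. Once this is in place, the two corollaries follow by direct computation: at $\delta = 1 - 1/q$ we have $\|p\|_2^2 = 1/q$, and the hypothesis reduces to $\lambda < 1/(q-1)^2$; at $\delta = 1 - 1/(q-1) + \epsilon$, $\|p\|_2^2 < 1/(q-1)$ by continuity, while the right-hand side tends to $1/(q-1)$ as $\lambda \to 0$, so the hypothesis holds for $\lambda = \lambda(\epsilon)$ small enough.
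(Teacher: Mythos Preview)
Your test vector $z=\sum_i c_i(\mathbf 1_{V_i^{(1)}}-a_i\mathbf 1)$ depends only on the first coloring $X_1$, so its Rayleigh quotient is a function of the $a_i$ and of the edge densities between the color classes of $X_1$ alone. No information about $X_2$, and in particular about $d(X_1,X_2)$, enters the inequality you obtain this way. Concretely, the step where you try to ``link the distance constraint on $W$ to the marginal distribution $a$'' cannot succeed: the marginals of $W$ are not constrained by the bound $\max_\sigma\sum_i w_{i,\sigma(i)}\le 1-\delta$. For instance, $w_{ij}=1/q^2$ for all $i,j$ has perfectly balanced marginals $a_i=b_j=1/q$ while achieving the maximal distance $1-1/q$; so the assertion that some $a_i$ must be at least $1-(q-1)(1-\delta)$ is false in general. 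This is a genuine gap, not a technicality.

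The paper's proof avoids this by working with sets that encode \emph{both} colorings: for each $\sigma\in S_q$ it looks at the agreement set $V_\sigma=\{v:X(v)=\sigma(Y(v))\}$, applies the Courant--Fischer test vector $\mathbf 1_{V_\sigma}-w(V_\sigma)\mathbf 1$ to get $e(V_\sigma,\overline{V_\sigma})\ge 2(1-\lambda)\bigl(w(V_\sigma)-w(V_\sigma)^2\bigr)$, and then \emph{sums over all $q!$ permutations}. A double count (each vertex lies in exactly $(q-1)!$ sets $V_\sigma$, each edge lies in exactly $2(q-2)(q-2)!$ of the boundaries) turns this into $\tfrac{1}{(q-1)!}\sum_\sigma w(V_\sigma)^2\ge 1-\tfrac{1-1/(q-1)}{1-\lambda}$. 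Finally, partitioning $S_q$ into cosets of the cyclic shift and using the constraints $w(V_\sigma)\le 1-\delta$, $\sum_{i=0}^{q-1}w(V_{\pi^i\sigma})\ge 1$ bounds each coset's contribution by $(q-1)(1-\delta)^2+(1-(q-1)(1-\delta))^2$, which is exactly where the extremal vector $p$ you identified appears. Your endgame computations for the two corollaries are correct; what is missing is a test vector that actually sees both colorings, and the averaging over $S_q$ that converts pointwise Rayleigh bounds into the global inequality.
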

\begin{proof}
    Suppose that $G = (V, E)$ is an $n$ vertex $d$-regular graph with $\lambda_2 \leq \lambda$ and $q$-colorings $X$ and $Y$ satisfying $d_c(X, Y) \geq \delta n$. Furthermore, assume that $\delta \geq 1 - 1/(q - 1)$. For each $\sigma \in S_q$, let 
    \[
        V_{\sigma} = \{v \in V | X(v) = \sigma(Y(v))\}.
    \]
    We have $d_c(X, Y) = n - \max_{\sigma} |V_{\sigma}|$ and thus $w_v(V_{\sigma}) \leq 1 - \delta \leq 1/(q - 1)$ for all $\sigma$.
    
    By \Cref{lem_test_vec}, we have 
    \[
        w_e(V_{\sigma}, \overline{V_{\sigma}}) \geq 2 (1 - \lambda) (w_v(V_{\sigma}) - w_v(V_{\sigma})^2). 
    \]
    Observe that each vertex $v \in V$ appears in $V_\sigma$ for exactly $(q - 1)!$ different $\sigma \in S_q$ and each edge $uv \in E$ appears in $E(V_{\sigma}, \overline{V_{\sigma}})$ for exactly $2 (q - 2)(q - 2)!$ different $\sigma \in S_q$. Therefore, summing over all $\sigma \in S_q$, we have 
    \[
        2 (q - 2)(q - 2)! \geq 2(1 - \lambda) \left((q - 1)! - \sum_{\sigma \in S_q} w_v(V_{\sigma})^2 \right).
    \]
    Rearranging, we have 
    \[
        \frac{1}{(q-1)!} \sum_{\sigma \in S_q} w_v(V_{\sigma})^2 \geq 1 - \frac{1 - \frac{1}{q - 1}}{1 - \lambda}. 
    \]
    Let $\pi = (1 \, 2 \, \cdots \, q) \in S_q$ be the cyclic shift. Then for any $\sigma \in S_q$, 
    \[
        V = \bigcup_{i = 0}^{q - 1} V_{\pi^i \sigma}.
    \]
    Since $w_v(V_{\sigma}) \leq 1/(q - 1)$, 
    \[
        \sum_{i = 0}^{q - 1} w_v(V_{\pi^i \sigma})^2 \leq (q - 1)(1 - \delta)^2 + (1 - (q - 1)(1 - \delta))^2.
    \]
    Since $S_q$ can be partitioned into $(q - 1)!$ such orbits, we have that 
    \[
        (q - 1)(1 - \delta)^2 + (1 - (q - 1)(1 - \delta))^2 \geq 1 - \frac{1 - \frac{1}{q - 1}}{1 - \lambda}.
    \]
    Thus, if the inequality does not hold for $(\delta, \lambda)$, it means that $G$ cannot have two $q$-colorings that are $\delta$-distinct. 
\end{proof}
Finally, we give the construction for the negative result in \Cref{thm_unique}~(1). 
\begin{lemma}\label{lem_non_unique}
    For all $\lambda > 0$, $\left( 1- \frac{1}{q - 1}, \lambda \right)$ is not in the unique regime. 
\end{lemma}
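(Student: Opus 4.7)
The plan is to exhibit, for each $\lambda > 0$, a $d$-regular graph $G$ with $\lambda_2(G) \le \lambda$ admitting two proper $q$-colorings $X, Y$ at distance at least $(1 - 1/(q-1))|V(G)|$. The overall strategy is a blow-up construction: identify a small template graph $H$ carrying two colorings $X_H, Y_H$ at the required boundary distance, then blow up each vertex of $H$ into a balanced set of size $N$ and replace each edge of $H$ with an internal $d$-regular bipartite expander, so that spectral control on $H$ transfers to $G$.

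The key combinatorial reformulation is that $d(X, Y) \geq (1 - 1/(q-1))n$ is equivalent to
\[
  \max_{\pi \in S_q} \sum_{i=1}^{q} |X^{-1}(i) \cap Y^{-1}(\pi(i))| \leq n/(q-1).
\]
To achieve equality, I would take $X_H$ and $Y_H$ to each use only $q-1$ of the $q$ colors with balanced intersections $M_{ij} = |V(H)|/(q-1)^2$; then every permutation matching weighs at most $(q-1)\cdot|V(H)|/(q-1)^2 = |V(H)|/(q-1)$. Properness of $X_H, Y_H$ then forces edges to go only between blocks $V_{ij}$ and $V_{i'j'}$ with $i \neq i'$ and $j \neq j'$, so $H$ is a blow-up of the tensor product $K_{q-1} \otimes K_{q-1}$ (with the caveat that for $q = 3$ this tensor is disconnected, so one must instead take the Prism $K_3 \square K_2$, arising from two triangles joined by a matching consistent with the identity and cyclic-shift colorings; a direct calculation gives $\lambda_2 = 1/3$ and $d(X_H, Y_H) = |V(H)|/2$). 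By standard equitable-partition spectral analysis and \Cref{lem_eigenval_of_random_bipartite} applied to the internal bipartite expanders, the blow-up satisfies $\lambda_2(G) \leq \max(\lambda_2(H), \lambda)$.

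The main obstacle is that simple templates like the Prism or $K_{q-1} \otimes K_{q-1}$ have a fixed positive $\lambda_2(H)$ (namely $1/3$ and $1/(q-2)^2$ respectively, arising from the eigenvalue $-1/(q-2)$ of $K_{q-1}$), which does not vanish. For arbitrarily small $\lambda > 0$ one must therefore iterate the template construction: replace $H$ by a larger structured graph whose vertices index many copies of the basic gadget, with ``shift-assignments'' $\tau_g \in S_q$ parameterized by elements $g$ of a product group $\Gamma^k$. For a Ramanujan-like Cayley structure on $\Gamma^k$, the spectral gap of the reduced type-quotient graph can be driven below any given $\lambda$ as $k \to \infty$, while the coloring structure lifts consistently because each shift $\tau_g$ preserves the intersection-matrix pattern. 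Verifying that a specific choice of shifts evades the tensor-product spectral lower bound and simultaneously keeps both $X_H$ and $Y_H$ proper is the technical heart of the argument; I would expect to resort to non-abelian $\Gamma$ (for instance $S_q$ itself) whose irreducible representations avoid the problematic trivial-like sub-representation responsible for the $1/(q-2)$-eigenvalue.
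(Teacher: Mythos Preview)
Your proposal has a genuine gap: you correctly identify that your templates $K_{q-1}\otimes K_{q-1}$ (or the prism for $q=3$) have a fixed positive $\lambda_2$, and that a blow-up by bipartite expanders inherits this eigenvalue via the equitable partition, so $\lambda_2(G)\ge\lambda_2(H)$ regardless of the internal expanders. Your proposed remedy---iterating with Cayley graphs on $\Gamma^k$ and choosing non-abelian $\Gamma$ to kill the offending eigenvalue---is only a sketch; you do not specify a group, a generating set, or the shift assignments $\tau_g$, nor do you verify that the two colorings stay proper and retain the balanced intersection matrix under these shifts. As stated, the argument does not produce a graph with $\lambda_2\le\lambda$ for arbitrary $\lambda>0$.

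The paper avoids the obstruction altogether by taking the template to be a single bipartite graph. Pick $d$ with $2/\sqrt{d}\le\lambda$ and let $G$ be a $d$-regular bipartite graph on $2(q-1)n$ vertices with $\lambda_2(G)\le\lambda$ (such graphs exist by \Cref{lem_eigenval_of_random_bipartite} or \Cref{lem_ramanujan}). Now define two colorings directly: $X$ colors the first side entirely by $1$ and the second side equally by $2,\ldots,q$; $Y$ colors the first side equally by $1,\ldots,q-1$ and the second side entirely by $q$. For any $\sigma\in S_q$, agreement on the first side requires $\sigma^{-1}(1)\in\{1,\ldots,q-1\}$ and yields at most $n$ matches; agreement on the second side requires $\sigma(q)\in\{2,\ldots,q\}$ and yields at most $n$ matches. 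Hence $d(X,Y)=2(q-1)n-2n=(1-\tfrac{1}{q-1})|V(G)|$. The point you missed is that bipartiteness already gives a template whose spectral gap can be made arbitrarily small by raising the degree, so no iteration or group machinery is needed.
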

\begin{proof}
    Fix $\lambda > 0$ and choose $d$ such that $2 / \sqrt{d} \leq \lambda$. Let $G = G_n$ be a sequence of $d$-regular bipartite graphs on $2(q - 1)n$ vertices. By \Cref{lem_eigenval_of_random_bipartite}, we can choose $G_n$ such that $\lambda_2(G_n) \leq 2 / \sqrt{d} \leq \lambda$ for sufficiently large $n$. 

    We now color $G$ in two ways. Let $X$ be a coloring that colors the first part of $G$ by $1$ and the second part by $2, \ldots, q$ such that each color is used on exactly $n$ vertices. Similarly, let $Y$ be a coloring that colors the second part of $G$ by $q$ and the first part of $G$ by $1, \ldots, q - 1$ such that each color is used on exactly $n$ vertices. Then 
    \[
        d_c(X, Y) = 2(q - 2)n = \left( 1 - \frac{1}{q - 1} \right) |V(G)|.
    \]
    Therefore, $\left( 1- \frac{1}{q - 1}, \lambda \right)$ is not in the unique regime.
\end{proof}

\section{Conclusion and future directions}\label{sec_future}
We introduced a new variant of error-correcting codes, which we call graphical error-correcting codes, that generalizes typical error-correcting codes. This problem can be viewed as a sphere packing problem on the space of proper $q$-colorings of a graph, which reduces to the typical sphere packing problem on $[q]^n$ equipped with the Hamming distance when the graph is empty. 

We believe that graphical error-correcting codes provides new insights for the fields of coding theory as well as extremal graph theory by presenting a novel connection between them. In the context of this paper, we studied the problem in the particular case of expander graphs, which has already led to useful applications. For example, Bafna et al. \cite{bafna2024} proved a result analogous to \Cref{thm_strong_unique} in the case $q = 3$ to construct an efficient algorithm for finding a large independent set in a $3$-colorable expander, while it is known to be NP-hard (assuming the unique-games conjecture) to find a large independent set in a general $3$-colorable graph.

One fundamental open question we want to address is the relationship between the Hamming distance $d_H$ and the coloring distance $d_c$. Inspired by the work of Bafna et al. \cite{bafna2024}, we originally viewed the ``distance'' between two graph colorings as ``color blind,'' leading to our definition of $d_c$. However, in the context of error-correcting codes, the Hamming distance is more natural. It turns out that up to constant factors (possibly depending on $\delta, \lambda, d, q$), all of our theorems about the exponential and constant regimes hold under either notion of distance. While the two distances are clearly not equivalent up to constant factors, there seems to be an intimate relationship between sets of distinct colorings and sets of strongly distinct colorings. An example of a question we can ask is the following.
\begin{question}
    Clearly, $R_{f, exp} \supset R_{g, exp}$ and $R_{f, con} \subset R_{g, con}$. Are either relations an equality?
\end{question}

Another natural future direction is to understand the landscape of the regimes in \Cref{fig_regimes} better and fill in the gaps. In particular, does $f$ and $g$ have an intermediate growth behavior for some $(\delta, \lambda)$? 
\begin{question}
    Is there some $(\delta, \lambda) \in [0, 1 - 1/q] \times (0, 1]$ that is neither in the exponential regime nor the constant regime for $f$ or $g$? If so, what growth behavior do they exhibit as $n \to \infty$?
\end{question}

It appears that when $q$ is large, the exponential regime should be much larger than what \Cref{thm_bipartite_exp,thm_exp} guarantee. Indeed, for $d < q - 1$, any $d$-regular graph on $n$ vertices has at least $(q - 1 - d)^n$ proper $q$-colorings. We expect that there are good expanders for which these colorings are distributed sufficiently ``uniformly'' over $[q]^n$. 

\begin{conjecture}\label{conj_exp}
    For all $q$ sufficiently large, there exists an infinite sequence of $d$-regular graphs with $\lambda_2 = \Theta(1/\sqrt{d})$ for some $d = \Theta(q)$ such that each graph has a $\delta$-distinct set of proper $q$-colorings of exponential size for any $\delta < 1 - 1/q$. In particular, this implies that $(\delta, \lambda)$ is in the exponential regime for all $\delta < 1 - 1/q$ and some $\lambda = O(1/\sqrt{q})$.
\end{conjecture}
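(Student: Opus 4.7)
The plan is to take $G = G_n$ to be a uniformly random $d$-regular bipartite graph on $n$ vertices, where $d = \lceil q / C \rceil$ for a sufficiently large absolute constant $C$. By \Cref{lem_eigenval_of_random_bipartite}, asymptotically almost surely $\lambda_2(G_n) \leq 2\sqrt{d-1}/d + o(1) = \Theta(1/\sqrt{q})$, so $d = \Theta(q)$ and $\lambda_2 = \Theta(1/\sqrt{d})$, as the conjecture requires. What remains is, for every fixed $\delta < 1 - 1/q$, to exhibit a $\delta$-distinct set of proper $q$-colorings of $G_n$ of size $e^{\Omega(n)}$.

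For the colorings themselves I would draw $X_1, \dots, X_N$ i.i.d.\ from the uniform distribution $\mu_G$ on proper $q$-colorings of $G_n$. When $q$ is sufficiently large relative to $d$ (the classical threshold $q > 2d$ already suffices, and much better thresholds are known from the recent spectral independence literature), the Glauber dynamics on proper $q$-colorings is rapidly mixing, and $\mu_G$ is known to have nearly uniform single-vertex marginals together with exponential decay of correlations between distant vertices. As a consequence, for two independent samples $X, Y \sim \mu_G$ and any $\sigma \in S_q$, the joint marginal of $(X(v), \sigma(Y(v)))$ at any vertex $v$ is $o_q(1)$-close to uniform on $[q]^2$, and hence
\[
    \E[d_H(X, \sigma(Y))] = \left(1 - \tfrac{1}{q} - o_q(1)\right) n.
\]
If one can additionally show that $d_H(X, \sigma(Y))$ concentrates around its mean with sub-Gaussian tails on scale $\Omega(n)$, then a union bound over $\sigma \in S_q$ and over the $\binom{N}{2}$ pairs yields that $\{X_1, \dots, X_N\}$ is $\delta$-distinct with positive probability for $N = e^{cn}$ and some $c = c(\delta, q) > 0$.

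The main obstacle is the concentration step. In \Cref{lem_exp_construction_colorings} and \Cref{lem_d_XY}, the coloring is a deterministic function of independent local random choices, so \Cref{lem_bounded_diff} applies directly; for the uniform measure on proper colorings, the properness constraint couples all vertices and a naive Lipschitz bound breaks down. The natural route is to invoke a modified log-Sobolev inequality or a spectral independence bound for Glauber dynamics on proper $q$-colorings to deduce sub-Gaussian concentration of $d_H(X, \sigma(Y))$, and then to track constants carefully enough that the exponential rate $c$ remains positive as $\delta \uparrow 1 - 1/q$. A cleaner but harder alternative would be to construct an explicit algebraic family of graphs, for instance iterated $2$-lifts of a tensor power of $K_q$ in the spirit of the construction of \Cref{lem_non_const}, together with an explicit family of exponentially many well-separated colorings, bypassing the probabilistic concentration step entirely.
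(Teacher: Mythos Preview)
The statement you are attempting is \Cref{conj_exp}, which the paper explicitly labels a \emph{conjecture} and leaves open in \Cref{sec_future}; there is no proof in the paper to compare against.

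Your proposal is a reasonable heuristic but not a proof, and you correctly flag the gap yourself. One sharpening worth noting: since $S_q$ acts on the set of proper $q$-colorings by permuting colors, the single-vertex marginal of the uniform measure $\mu_G$ is \emph{exactly} uniform on $[q]$, so $\E[d_H(X,\sigma(Y))]=(1-1/q)n$ with no $o_q(1)$ error. This actually matters, because the conjecture asks for every $\delta<1-1/q$ at the given $q$; any fixed slack in the mean would already fall short.

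The concentration step is precisely why this is still a conjecture. What you need is sub-Gaussian concentration of Hamming-Lipschitz functionals under $\mu_G$ at rate $\exp(-\Omega_q(t^2/n))$, with constants that do not degenerate when $d=\Theta(q)$. Recent spectral-independence and entropy-tensorization results for proper colorings (in regimes like $q\ge(1+\epsilon)\Delta$) are the natural tools and may well suffice, but extracting the right concentration statement with explicit dependence on $q$ and $d$, and checking that the exponential rate remains positive for every $\epsilon=1-1/q-\delta>0$, is substantive work that neither the paper nor your sketch carries out. Your alternative of an explicit algebraic family in the spirit of \Cref{lem_non_const} is equally open. In short, your outline explains why the conjecture is plausible; it does not resolve it, which is consistent with its status in the paper.
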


This intuition suggests that only expansion on the order of $o_q(1)$ forces the sizes of $\delta$-distinct sets of colorings to be sub-exponential, and the predominant behavior of the count function $f_{\delta, \lambda, d}(n)$ is exponential, as is the case for typical error-correcting codes. To gain a more refined understanding of the exponential regime, it is thus natural to study the rate question as in the theory of error-correcting codes. 
\begin{question}
For fixed $q \geq 3$ and $(\delta, \lambda, d)$, give bounds for
\[
    R(\delta, \lambda, d) := \limsup_{n \to \infty} \frac{\log_{q} f_{\delta, \lambda, d}(n)}{n}.
\]
In particular, we always have that $0 \leq R(\delta, \lambda, d) \leq 1$.
\end{question}
When $q - 1 > d$, $q$-colorability is guaranteed. Thus, we may also consider the rate question for two-sided expanders as suggested in \Cref{rmk:one-sided}. In particular, for two-sided expanders, we have access to more powerful tools such as the expander mixing lemma. We could also consider the rate question for families of graphs that do not expand, such as graphs with bounded doubling. In this case, the restrictions imposed by the graph on the space of proper colorings are ``local,'' which makes it potentially possible to obtain rates close to those of typical error-correcting codes.

\bibliographystyle{amsplain0}
\bibliography{bib}

\end{document}